\documentclass[12pt,reqno]{amsart}
\usepackage{cite}
\usepackage[colorlinks,citecolor=blue,urlcolor=blue,hypertexnames=false]{hyperref}
\usepackage{amssymb,amsmath,amsfonts,latexsym}
\usepackage{bm}
\usepackage{mathtools, mathrsfs}
\usepackage{enumerate}

\usepackage{array,graphics,color, csquotes}
\allowdisplaybreaks

\numberwithin{equation}{section}
\newtheorem{dfn}{Definition}[section]
\newtheorem{thm}[dfn]{Theorem}
\newtheorem{lma}[dfn]{Lemma}

\newtheorem{rmrk}[dfn]{Remark}
\newtheorem{prob}[dfn]{Problem}

\DeclarePairedDelimiterX{\norm}[1]{\lVert}{\rVert}{#1}
\DeclarePairedDelimiterX{\bnorm}[1]{\big\lVert}{\big\rVert}{#1}
\DeclarePairedDelimiterX{\Bnorm}[1]{\Big\lVert}{\Big\rVert}{#1}

\newcommand{\R}{\mathbb{R}}
\newcommand{\C}{\mathbb{C}}

\newcommand{\N}{\mathbb{N}}

\newcommand{\cS}{\mathcal{S}}
\newcommand{\hil}{\mathcal{H}}

\newcommand{\bh}{\mathcal{B}(\hil)}

\newcommand{\Tr}{\operatorname{Tr}}

\newcommand{\ie}{\hookrightarrow}
\newcommand{\nie}{\not\hookrightarrow}

\newcommand{\la}{\langle}
\newcommand{\ra}{\rangle}

\begin{document}

	\title{Isometric Embeddability of Schatten Classes Revisited}

	\author[Chattopadhyay] {Arup Chattopadhyay}
	\address{Department of Mathematics, Indian Institute of Technology Guwahati, Guwahati, 781039, India}
	\email{arupchatt@iitg.ac.in, 2003arupchattopadhyay@gmail.com}
	
	\author[Pradhan]{Chandan Pradhan}
	\address{Department of Mathematics and Statistics, University of New Mexico, 311 Terrace Street NE, Albuquerque, NM 87106, USA}
	\email{chandan.pradhan2108@gmail.com, cpradhan@unm.edu}
	
	\author[Skripka] {Anna Skripka}
	\address{Department of Mathematics and Statistics, University of New Mexico, 311 Terrace Street NE, Albuquerque, NM 87106, USA}
	\email{skripka@math.unm.edu}

	\subjclass[2010]{46B04, 46L51, 15A60, 47A55}
	
	\keywords{Isometric embedding; Schatten $p$-class; multilinear operator integral}
	

	\begin{abstract}
 In this note, we summarize known results and open questions on the existence of isometric embeddings between different Schatten classes as well as obtain a new non-embeddability result using a novel method. We also provide a brief overview of the relevant methods.
	\end{abstract}
	
	\maketitle
	
	\section{Introduction}
Given two (quasi-)Banach spaces, a fundamental problem is to determine whether one can be isometrically embedded into the other. We recall that an isometric embedding of a (quasi-)Banach space $X$ into a (quasi-)Banach space $Y$ is a linear isometry mapping $X$ to $Y$. The goal of this note is to survey results and open questions on the existence of isometric embeddings between different Schatten classes on both finite- and infinite-dimensional Hilbert spaces. We also discuss the embeddability of sequence spaces that arise as isometric copies of commutative subspaces of Schatten classes and of some relevant function spaces.

Let $\hil$ be a complex separable Hilbert space and let $\bh$ denote the algebra of all bounded linear operators on $\hil$. For $0<p<\infty$, the Schatten class $\cS_p(\hil)$ is defined as the set of all compact operators $T\in\bh$ satisfying
\[
\|T\|_{p}
:=
\Big(\sum_{n=1}^\infty s_n(T)^p\Big)^{1/p}
=
\left( \mathrm{Tr}(|T|^p)\right)^{1/p}
<\infty,
\]
where $(s_n(T))_{n\ge1}$ denotes the sequence of singular values of $T$, that is, the eigenvalues of $|T| := (T^*T)^{1/2}$, counted with multiplicity. For $p=\infty$, we set $\cS_\infty(\hil)$ to be the space of all compact operators on $\hil$, equipped with the operator norm $\|\cdot\|$. For $1\le p\leq \infty$, $\cS_p(\hil)$ is a Banach space with respect to the norm $\|\cdot\|_p$, whereas for $0<p<1$, $\cS_p(\hil)$ is a quasi-Banach space with respect to the quasi-norm $\|\cdot\|_p$. When $\hil=\mathbb{C}^n$, we write $\cS_p^n := \cS_p(\mathbb{C}^n)$.

There is a natural isometric embedding of the sequence spaces
\begin{align*}
&\ell_p(\C):=\big\{ (x_n) :\; x_n\in\C,\; \|(x_n)\|_p^p:=\sum_{n=1}^\infty |x_n|^p < \infty \big\},\quad p\in(0,\infty),\\
&\ell_\infty(\C) := \big\{ (x_n) :\;  x_n\in\C,\; \|(x_n)\|_\infty:=\sup_n |x_n| < \infty \big\}
\end{align*}
into $\cS_p(\hil)$ for $p\in(0,\infty]$, as closed subspaces. Indeed, each $\ell_p(\C)$ identifies isometrically with the subspace of operators in $\cS_p(\hil)$ that are diagonal with respect to a fixed orthonormal basis of $\hil$. Hence, the classical sequence spaces appear as canonical commutative subspaces of the noncommutative Schatten classes and are therefore relevant to the embeddability problem for the latter.

Another class of commutative Banach spaces relevant to the problem of embeddability of Schatten classes is provided by the classical function spaces
\begin{align*}
&L^p(\Omega, \C, d\mu) := \big\{ f:\Omega \to \C :\; \|f\|_p^p := \int_{\Omega} |f|^p \, d\mu < \infty \big\}, \quad p\in[1,\infty),\\
&L^\infty(\Omega, \C, d\mu) := \big\{ f:\Omega \to \C :\; \|f\|_\infty := \operatorname{ess\,sup}_{\Omega} |f| < \infty \big\},
\end{align*}
where $(\Omega, d\mu)$ is a measure space.
 The connection between the embeddability of $L^p(\Omega, \mathbb{C}, d\mu)$ and that of $\cS_p(\hil)$ is fundamental to our new result (see Theorem \ref{mainthm}) and is discussed in the next section.

 The existence of isometric embeddings between finite-dimensional $\ell_p$ spaces has been extensively studied, and in most cases the answer turns out to be negative. Let $\mathbb K$ be one of the fields $\R, \C, \mathbb H$, where $\mathbb H$ denotes the field of quaternions $\xi = a + b\bm i + c\bm j + d\bm k$ with real coefficients $a, b, c, d$. For $0<p<\infty$, we denote by $\ell_p^n(\mathbb{K})$ the $\mathbb{K}$-linear space $\mathbb{K}^n$ equipped with the $\ell_p$ quasi-norm
\[\|(a_1,\ldots, a_n)\|_p=\Big(\,\sum_{k=1}^{n}|a_k|^p\,\Big)^{\frac{1}{p}}.\]	

In the study of isometries between Schatten $p$-classes, or between their canonical subspaces $\ell_p^n(\C)$, several embedding problems arise naturally.

Throughout this note, we denote the existence of an isometric embedding of $X$ into $Y$ by $X\ie Y$ and its absence by $X\nie Y$.

\begin{prob}\label{prob-0}
	Let $(q,p)\in(0,\infty]\times(0,\infty]$ with $p\neq q$.
	\begin{enumerate}
		\item Let $m,n\in\N$ with $2\le m\le n<\infty$. When does $\ell_q^m(\mathbb K)\ie\ell_p^n(\mathbb K)$?
		\item Let $m,n\in\N$ with $2\le m\le n<\infty$. When does $\cS_q^m\ie\cS_p^n$?
		\item Let $\hil$ be a complex separable Hilbert space. When does $\cS_q(\hil)\ie\cS_p(\hil)$?
	\end{enumerate}
\end{prob}

The partial solutions and the unsettled cases of Problem \ref{prob-0} are discussed in the next two sections.

The embeddability of Schatten classes is a particular case of the problem of embeddability of noncommutative $L_p$-spaces. The latter, more general problem is beyond the scope of this paper, but interested readers can find a discussion in
\cite{Ju00, JuPa10, JuPa08, JuPa082, Ra08, SuXu03, Xu06, Ye81}.

\section{Results on embeddability.}

\subsection{Embeddability of sequence spaces.}

In his seminal work \cite{Ba32}, S.~Banach characterized all linear isometries of $\ell_p(\C)$ onto itself, as stated in the theorem below.

\begin{thm}(\!\!\cite{Ba32})
	Let $U$ be a linear onto isometry on $\ell_p(\C)$, where $1\le p\neq 2$. Then, there exist a function $\phi:\N\to\N$ and a sequence $\{\epsilon_n\}_{n\in\N}$ such that
	\begin{enumerate}[(a)]
		\item $\phi$ is a permutation of $\N$,
		\item $|\epsilon_n|=1$ for all $n\in\N$,
		\item for every $(x_n)\in\ell_p(\C)$,
		\[
		U((x_n))=(\epsilon_n x_{\phi(n)}).
		\]
	\end{enumerate}
	Conversely, for any $\phi$ and $\{\epsilon_n\}$ satisfying {\rm(a)} and {\rm(b)}, the operator $U$ defined by {\rm(c)} is a linear isometry on $\ell_p(\C)$.
\end{thm}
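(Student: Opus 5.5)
The converse is immediate. If $\phi$ is a permutation and $|\epsilon_n|=1$, then $\sum_n|\epsilon_nx_{\phi(n)}|^p=\sum_n|x_n|^p$, and for $p=\infty$ the supremum is likewise unchanged. The map $U$ is also clearly linear and onto, since its inverse is $(y_n)\mapsto(\overline{\epsilon_{\phi^{-1}(n)}}\,y_{\phi^{-1}(n)})$. The substance is the direct statement, which I will prove by showing that $U$ maps disjointly supported vectors to disjointly supported vectors, and hence maps standard basis vectors to unimodular multiples of standard basis vectors.

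\textbf{Step 1 (disjointness characterization, $1\le p<\infty$, $p\neq 2$).} I will show that $x,y\in\ell_p(\C)$ have disjoint supports if and only if
\[
\|x+y\|_p^p+\|x-y\|_p^p=2\|x\|_p^p+2\|y\|_p^p .
\]
This reduces, coordinatewise, to the scalar inequality $|a+b|^p+|a-b|^p\ge 2|a|^p+2|b|^p$ for $p>2$, with the reverse inequality for $1\le p<2$. Equality holds if and only if $ab=0$, and here $p\neq 2$ is essential. For $p=1$ I will check equality directly: with $b\neq0$, $|a+b|+|a-b|=2\max(|a|,|b|)$-type bounds show equality forces $a=0$. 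More generally, this follows from strict convexity, or concavity, of $t\mapsto t^{p/2}$ applied to $|a\pm b|^2=|a|^2+|b|^2\pm2\operatorname{Re}(a\bar b)$, together with Clarkson-type arguments. Summing over coordinates, equality of the sums forces equality in every coordinate, since every term deviates in the same direction. Because the identity is expressed purely in terms of norms, a linear isometry $U$ preserves disjointness, and so does $U^{-1}$.

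I expect the main obstacle to be the scalar equality case, especially at $p=1$ and near the complex phases. For $p\neq1,2$ I will use the convexity argument above. For $p=1$ I will use the equality condition in the triangle inequality instead: $|a+b|+|a-b|=2|a|+2|b|$ would require both $|a+b|=|a|+|b|$ and $|a-b|=|a|+|b|$, which is impossible unless $ab=0$.

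\textbf{Step 2 (atoms go to atoms).} Call $x\ne0$ an atom if it cannot be written as $x=y+z$ with $y,z\neq0$ disjointly supported. The atoms are exactly the nonzero multiples of the basis vectors $e_k$. By Step 1, $U$ and $U^{-1}$ preserve atoms, so $Ue_n=\epsilon_n e_{\psi(n)}$ with $|\epsilon_n|=1$, using that $\|Ue_n\|_p=1$. The map $\psi$ is injective because $e_n\perp e_m$ in the disjointness sense forces their images to be disjoint. It is surjective because $U^{-1}e_k$ is also an atom. Setting $\phi=\psi^{-1}$ gives a permutation.

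\textbf{Step 3 (extension).} By linearity, $U$ has the stated form on finitely supported sequences. These are dense for $p<\infty$, and $U$ is continuous, so (c) holds on all of $\ell_p(\C)$.
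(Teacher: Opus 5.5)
The paper does not prove this statement; it quotes it from Banach, so there is no in-paper argument to compare with. Your route is the standard proof for $1\le p<\infty$, $p\neq 2$: detect disjoint supports through the equality case of $|a+b|^p+|a-b|^p$ versus $2|a|^p+2|b|^p$, deduce that $U$ and $U^{-1}$ preserve disjointness, then send atoms to atoms. It is essentially the argument Lamperti used for general $L_p$ spaces, and it is correct in that range.

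Step~1 needs tightening in one place. Write $|a\pm b|^2=s\pm r$ with $s=|a|^2+|b|^2$ and $r=2\operatorname{Re}(a\bar b)$. Strict convexity or concavity of $t\mapsto t^{p/2}$ only forces $r=0$ at equality; for instance $a=1$, $b=i$ gives $r=0$ with $ab\neq 0$. What actually forces $ab=0$ is the second comparison, $s^{p/2}$ versus $|a|^p+|b|^p$. This uses strict superadditivity of $t^{p/2}$ for $p>2$ and strict subadditivity for $p<2$, and it should replace the vague appeal to ``Clarkson-type arguments.''

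That same two-step argument already covers $p=1$, so no separate case is needed there. Also, $|a+b|+|a-b|=2\max(|a|,|b|)$ is false for complex scalars, although your triangle-inequality argument for $p=1$ is fine.

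There is a harmless indexing slip at the end. With $Ue_n=\epsilon_n e_{\psi(n)}$ you get $(Ux)_k=\epsilon_{\phi(k)}x_{\phi(k)}$, so the unimodular sequence in (c) is $(\epsilon_{\phi(k)})_k$.

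On scope: your converse checks $p=\infty$, but your direct argument does not reach that case, and the problem is more than the lack of density in Step~3. In $\ell_\infty$, disjointness cannot be detected from norms of combinations of the two vectors. For example, $x=(1,\tfrac12,0,0,\dots)$ and $y=(0,\tfrac12,1,0,\dots)$ satisfy $\|\lambda x+\mu y\|_\infty=\max(|\lambda|,|\mu|)$ for all scalars $\lambda,\mu$, exactly as a disjoint pair would. So if the statement is read to include $p=\infty$, you need a global argument, such as Banach--Stone via $\ell_\infty\cong C(\beta\N)$, where homeomorphisms of $\beta\N$ permute the isolated points. For the classical range $1\le p<\infty$, $p\ne 2$, your proof is complete.
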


The following canonical embeddings demonstrate the existence of affirmative solutions to special cases of Problem \ref{prob-0} (1).

\begin{thm}
Let $m,n\in\N$ satisfy $2 \leq m \leq n$. Then,
\begin{align}
\label{simple_cases}
&\ell_p^m(\mathbb K)\ie\ell_p^n(\mathbb K)\ie\ell_p(\mathbb K)\quad\text{for all }  p\in (0,\infty],\\
\label{iso-1-infty}&\ell_1^2(\R)\ie \ell_\infty^n(\R).
\end{align}
\end{thm}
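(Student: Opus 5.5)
The plan is to write down explicit maps in each case and check that they preserve the (quasi-)norm.

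For \eqref{simple_cases} we use zero-padding, which is the coordinate inclusion. Define
\[
J_{m,n}:\ell_p^m(\mathbb K)\to\ell_p^n(\mathbb K),\qquad J_{m,n}(a_1,\dots,a_m)=(a_1,\dots,a_m,0,\dots,0),
\]
and similarly
\[
J_n:\ell_p^n(\mathbb K)\to\ell_p(\mathbb K),\qquad J_n(a)=(a_1,\dots,a_n,0,0,\dots).
\]
Both maps are $\mathbb K$-linear, since scalar multiplication acts coordinatewise. For $p<\infty$ the appended zeros add nothing to $\sum|a_k|^p$, so the quasi-norm is unchanged. For $p=\infty$, $\max_k|a_k|$ is unchanged because $|0|=0$ does not exceed any $|a_k|$. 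The only point to note is that nothing here uses the triangle inequality, so the argument covers $0<p<1$ equally well. It is also identical for $\mathbb K=\mathbb H$, since $|\cdot|$ is the quaternionic modulus in every case.

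For \eqref{iso-1-infty} it suffices to embed $\ell_1^2(\R)$ into $\ell_\infty^2(\R)$, since composing with the zero-padding map from \eqref{simple_cases} for $p=\infty$ then gives the embedding into $\ell_\infty^n(\R)$ for every $n\ge 2$. The candidate map is
\[
T(a,b)=(a+b,\;a-b).
\]
The key identity is that for real $a,b$,
\[
\max\{|a+b|,|a-b|\}=|a|+|b|.
\]
To verify it, choose signs $\epsilon,\delta\in\{\pm1\}$ with $\epsilon a=|a|$ and $\delta b=|b|$. One of $a+b$ and $a-b$ equals $\epsilon\,(|a|+|b|)$, which gives $\max\ge |a|+|b|$. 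The triangle inequality gives $\le$. Hence $\|T(a,b)\|_\infty=\|(a,b)\|_1$, and $T$ is an isometric embedding.

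No step is a genuine obstacle. The only place requiring care is the identity above, which depends on $a,b$ being real. Over $\C$ it fails: taking $(a,b)=(1,i)$ gives $|1\pm i|=\sqrt2<2$. This explains why \eqref{iso-1-infty} is stated only for $\mathbb K=\R$.
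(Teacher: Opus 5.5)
Your proposal is correct and takes the same approach as the paper: the embeddings in \eqref{simple_cases} are the immediate zero-padding inclusions, and \eqref{iso-1-infty} is realized by the same map (the paper writes it as $(x,y)\mapsto(x-y,x+y)$), with your composition with zero-padding making explicit the passage from $n=2$ to general $n$ that the paper leaves implicit. One caveat on your closing remark: the failure of this particular map over $\C$ does not by itself explain the restriction to $\R$, since another map might still work; the actual reason is the non-embeddability $\ell_1^2(\C)\nie\ell_\infty^n(\C)$ from Theorem~\ref{comm-thm-1}.
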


\begin{proof}
All embeddings in \eqref{simple_cases} are immediate. An example of a linear isometry in \eqref{iso-1-infty} is given by $(x,y)\mapsto (x-y,\,x+y)$.
\end{proof}

Moreover, given $m \in \mathbb{N}$ and $p \in 2\mathbb{N}$, 
upper bounds on $n \in \mathbb{N}$ 
for which $\ell_2^m(\mathbb K)\ie\ell_p^n(\mathbb K)$ are obtained in 
the next theorem using an equivalence between the isometric embeddings and the cubature formulas for polynomial functions on projective spaces.

	\begin{thm}(\!\!\cite[Theorem 2]{LySh04})
Let $m\in\N$ and $p\in 2\N$. Then there exists $n\in\N$ such that $m\le n\le \Lambda(m,p)$, where
		\begin{equation}\label{inteq4}
			\Lambda(m,p) =
\begin{cases*}
\begin{pmatrix} m+p-1 \\ m-1 \end{pmatrix}, & $\mathbb K=\mathbb{R}$,\\
\begin{pmatrix} m+p/2-1 \\ m-1 \end{pmatrix}^2, & $\mathbb K=\mathbb{C}$,\\
\frac{1}{2m-1}\begin{pmatrix} 2m+p/2-2 \\ 2m-2 \end{pmatrix}
\begin{pmatrix} 2m+p/2-1 \\ 2m-2 \end{pmatrix}, &$\mathbb K=\mathbb{H}$,
			\end{cases*}
		\end{equation}
such that $\ell_2^m(\mathbb K)\ie\ell_p^n(\mathbb K)$.
	\end{thm}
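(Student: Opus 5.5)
An isometric embedding $\ell_2^m(\mathbb K)\ie\ell_p^n(\mathbb K)$ is a linear map $x\mapsto(\langle x,v_1\rangle,\dots,\langle x,v_n\rangle)$ with $v_k\in\mathbb K^m$ such that
\[
\sum_{k=1}^n |\langle x,v_k\rangle|^p=\|x\|_2^p\quad\text{for all }x\in\mathbb K^m .
\]
The plan is to reformulate this condition as a cubature (design) problem on the unit sphere, or projective space, and then bound the number of nodes by a Carathéodory-type dimension count.

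\textbf{Step 1 (reformulation).} Write $p=2s$. Then $\|x\|_2^p=(\sum|x_i|^2)^s$, and each $|\langle x,v\rangle|^{2s}$ is a homogeneous polynomial in the real coordinates of $x$. In the real case it has degree $p$ in $x$. In the complex case it is bihomogeneous of bidegree $(s,s)$ in $(x,\bar x)$. In the quaternionic case it lies in the analogous invariant space. Let $V$ denote the real span of all functions $x\mapsto|\langle x,v\rangle|^p$ with $v\in\mathbb K^m$. The identity above then says that $\|x\|^p$, which lies in $V$ because averaging over the sphere gives $\int_{S}|\langle x,v\rangle|^p\,d\sigma(v)=c\,\|x\|^p$ by unitary invariance with $c>0$, is a \emph{positive} combination of finitely many generators $|\langle x,v\rangle|^p$. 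Positive weights can be absorbed by rescaling $v_k$, since $|\langle x,tv\rangle|^p=t^p|\langle x,v\rangle|^p$.

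\textbf{Step 2 (Carathéodory).} The function $c\|x\|^p$ lies in the closed convex cone generated by the compact set $\{|\langle x,v\rangle|^p: v\in S\}$ inside the finite-dimensional space $V$; indeed it is the barycenter of the uniform measure. By Carathéodory's theorem for cones, it is a positive combination of at most $\dim V$ generators. This gives $n\le\dim V$. The lower bound $n\ge m$ is automatic, since the map must be injective. If fewer than $m$ nodes are produced, pad with zero coordinates.

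\textbf{Step 3 (dimension count).} It remains to show $\dim V\le\Lambda(m,p)$.
\begin{itemize}
\item In the real case, $V$ sits inside the homogeneous polynomials of degree $p$ in $m$ variables, whose dimension is $\binom{m+p-1}{m-1}$.
\item In the complex case, $V$ sits inside the real span of $x^\alpha\bar x^\beta$ with $|\alpha|=|\beta|=p/2$, which has real dimension $\binom{m+p/2-1}{m-1}^2$.
\item In the quaternionic case, identify the functions with polynomials on $\mathbb H^m\cong\R^{4m}$ invariant under right multiplication by unit quaternions, i.e.\ functions on quaternionic projective space. Their dimension is given by the stated product formula, which is the dimension of the space of polynomials of degree $\le p/2$ on $\mathbb{HP}^{m-1}$ (a sum over harmonic components).
\end{itemize}

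\textbf{Main obstacle.} I expect the quaternionic dimension computation to be the hardest step, since it requires the decomposition of $L^2(\mathbb{HP}^{m-1})$ into $Sp(m)$-irreducibles and a summation of their dimensions. My first attempt would be to quote the known zonal/spherical-function dimensions for projective spaces and sum them telescopically to reach the stated closed form. One could also sharpen the bound by one using that $V$ contains the constants on the sphere, but the stated bound does not need this.
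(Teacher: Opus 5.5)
Your argument is correct and follows the route the paper points to. The paper gives no proof of its own: it cites \cite[Theorem 2]{LySh04} and the equivalence between such embeddings and cubature formulas on projective spaces. Your version obtains $c\|x\|^p$ by averaging over the sphere, writes it by Carath\'eodory as a positive combination of at most $\dim V$ functions $|\langle x,v\rangle|^p$ in the finite-dimensional space of invariant degree-$p$ polynomials, and then bounds $\dim V$. There are two small fixes. First, in the complex case $|\langle x,v\rangle|^p$ lies in the \emph{complex} span of the $x^\alpha\bar x^\beta$, not their real span; the real-valued part of that complex span still has real dimension $\binom{m+p/2-1}{m-1}^2$, so your bound stands. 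Second, the quaternionic count you defer does hold: with $D_k=\frac{(2k+2m-1)\,(k+2m-2)!\,(k+2m-3)!}{(2m-1)!\,(2m-3)!\,k!\,(k+1)!}$, the dimension of the $k$th harmonic component on $\mathbb{HP}^{m-1}$, one has $D_0=1$ and, for $s\ge1$, $\Lambda(m,2s)-\Lambda(m,2s-2)=D_s$ (with $\Lambda(m,0)=1$), so $\Lambda(m,p)=\sum_{k=0}^{p/2}D_k$.
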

	
The origins of the embedding \eqref{inteq4} and the associated bounds on $n$ trace back to \cite{LyVa93}, where similar problems were resolved
	 in the setting of finite-dimensional $\ell_p$-spaces over $\mathbb{R}$. In the case $\mathbb K=\mathbb{C}$, the upper bound $\Lambda(m,p)$ in \eqref{inteq4} was previously obtained in \cite{Konig}.  We refer interested readers to \cite{Ly08,Ly09,LySh01, LySh04} for further details. The following result on non-embeddability is obtained in \cite{LySh04}.

	\begin{thm}(\!\!\cite[Theorem 1]{LySh04})\label{comm-thm-1}
Let $m,n\in\N$ satisfy $2 \leq m \leq n$. Then, the following assertions hold.
\begin{enumerate}
\item $\ell_q^m(\mathbb{K})\nie\ell_p^n(\mathbb{K})$\; for\; $\mathbb{K}\in\{\C, \mathbb{H}\}$\; and\;
$(q,p)\in [1,\infty]\times[1,\infty] \setminus \big(\{2\}\times 2\N\big)$,\; $q\neq p$.

\item $\ell_q^m(\mathbb{R})\nie\ell_p^n(\mathbb{R})$\; for\;
$(q,p)\in[1,\infty]\times[1,\infty]\setminus\big((\{2\}\times 2\N)\bigcup( \{1,\infty\}\times\{1,\infty\})\big)$,\; $q\neq p$.
\end{enumerate}
\end{thm}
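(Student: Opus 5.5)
The plan is to reduce the existence of an isometric embedding to identities between functions on the unit sphere of the source space. Then I would use smoothness and analytic structure to force either $q=2$ with $p\in 2\N$, or (over $\R$) the exceptional pair $\{1,\infty\}$.

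Suppose $T:\ell_q^m(\mathbb K)\to\ell_p^n(\mathbb K)$ is a linear isometry. Restricting $T$ to the span of the first two coordinates gives an isometry $\ell_q^2\ie\ell_p^n$, so we may assume $m=2$. Write $T(x)=(\langle x,a_1\rangle,\dots,\langle x,a_n\rangle)$ with $a_k\in\mathbb K^2$. For finite $p,q$ the isometry condition reads
\[
\Big(\sum_{k=1}^n |\langle x,a_k\rangle|^p\Big)^{1/p}=\|x\|_q\qquad (x\in\mathbb K^2).
\]

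\emph{Case $p,q<\infty$.} The left-hand side raised to the power $p$ is $F(x)=\sum_k|\langle x,a_k\rangle|^p$, a finite sum of $p$-th powers of absolute values of linear forms. The right-hand side gives $\|x\|_q^p$. Set $x=(1,t)$ with $t\in\mathbb K$ near $0$ and compare regularity/asymptotics. On the right, $(1+|t|^q)^{p/q}=1+\frac pq|t|^q+\dots$ has a singular term of order $|t|^q$ whenever $q\notin 2\N$ (in the appropriate real-variable sense). On the left, each term $|\alpha_k+\beta_k t|^p$ with $\alpha_k\neq0$ is real-analytic in $t$ near $0$. The terms with $\alpha_k=0$ contribute $c|t|^p$. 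Hence $|t|^q$ with $q\ne p$ and $q\notin2\N$ cannot be matched, so $q\in 2\N$.

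Symmetrically, expand near a point where some linear form vanishes. Take $x$ on the line $\langle x,a_k\rangle=0$: the left side has a nonanalytic $|s|^p$ singularity unless $p\in2\N$ or cancellations occur among terms with parallel $a_k$. Grouping parallel $a_k$ shows no cancellation, since coefficients add positively. The right side $\|x\|_q^p$ is real-analytic away from the coordinate axes when $q\in2\N$. Near the axes the only singularity comes from $|x_i|^q$, which is a polynomial. So $p\in 2\N$, or all $a_k$ are coordinate directions. In the latter case $\|T x\|_p^p=c_1|x_1|^p+c_2|x_2|^p$, which forces $p=q$.

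With both $p,q\in2\N$, comparing homogeneous polynomial identities in $x$ and $\bar x$ (over $\C$, $\mathbb H$) shows $\|x\|_q^p$ must be a polynomial. This forces $q=2$ unless $q\mid p$. For $q\ne 2$ one then checks that $(|x_1|^q+|x_2|^q)^{p/q}$ is not a positive combination of $|\langle x,a\rangle|^{p}$. The route is to apply the Laplacian, or to average over the torus action $x\mapsto(e^{i\theta}x_1,x_2)$ in the complex case, and compare with the Funk--Hecke/zonal expansion. This is the step I expect to be the main obstacle. Over $\R$ I would use the known positive-definiteness failure of $\|\cdot\|_q^{p}$ (Koldobsky-type Fourier argument).

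\emph{Cases with $\infty$.} If $p=\infty$, the unit ball of $\ell_p^n$ is a polytope. Its sections are polytopes, so the unit ball of $\ell_q^m$ must be a polytope, giving $q\in\{1,\infty\}$. Over $\C$ and $\mathbb H$ the unit ball of $\ell_1^m$ is not a real polytope (its restriction to $\mathbb K\times\{0\}$ is a round disk), so this is impossible, and over $\R$ this yields the exception. If $q=\infty$ and $p<\infty$, then $\ell_\infty^m$ has nonsmooth unit ball while $\ell_p^n$ is smooth for $1<p<\infty$, which is a contradiction. The case $p=1$ is handled by comparing extreme points/faces: faces of $\ell_\infty^m(\C)$ are not polyhedral. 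Real $\{1,\infty\}$ pairs are excluded from the statement.
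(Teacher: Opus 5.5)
The paper gives no proof of this statement; it only quotes it from Lyubich--Shatalova, so your argument has to stand on its own. Several parts do stand:
\begin{itemize}
\item the reduction to $m=2$;
\item the analyticity comparisons, which force $q\in 2\N$ and then $p\in 2\N$ when $p,q<\infty$;
\item the smoothness argument for $q=\infty$, $1<p<\infty$;
\item the face argument for $(q,p)=(\infty,1)$.
\end{itemize}

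\textbf{First gap: the even case.} The step you flag is a genuine gap: $q\in 2\N$, $q\ge 4$, $p=kq$ with $k\ge 2$. You give no argument for it. The route you name for $\R$ cannot work: Koldobsky's criterion is phrased through $\Gamma(-p/2)\,(\|x\|^p)^{\wedge}$ and degenerates exactly for $p\in 2\N$, where $\|x\|_q^p$ is a polynomial whose Fourier transform is supported at the origin. What closes this case is duality for sums of even powers. Use the apolar pairing $[g,h]=g(\partial)h$ on real forms of degree $p$. Then $[g,\langle\cdot,u\rangle^p]=p!\,g(u)\ge 0$ for every $g\ge 0$. Now take $g(x)=x_1^{q-2}x_2^{p-q-2}(x_1^2-x_2^2)^2\ge 0$. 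Because $q\pm 2\notin q\mathbb{Z}$, only the monomial $x_1^qx_2^{p-q}$ is shared with $(x_1^q+x_2^q)^k$, so $[g,(x_1^q+x_2^q)^k]=-2k\,q!\,(p-q)!<0$. For $\C$ and $\mathbb H$, restrict to real $x$. Write $|\langle x,a\rangle|^p$ as a positive average of $\langle x,u\rangle^p$ over real vectors $u$; for example, $|w|^p=c_p\int_0^{2\pi}|\Re(e^{i\theta}w)|^p\,d\theta$ for $w\in\C$. The same $g$ then gives the contradiction.

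\textbf{Second gap: $p=\infty$ over $\C$ and $\mathbb H$.} Your argument relies on a false claim. For $\mathbb K\neq\R$ the unit ball of $\ell_\infty^n(\mathbb K)$ is a product of Euclidean balls, not a polytope. Your reasoning would equally ``prove'' $\ell_\infty^m(\mathbb K)\nie\ell_\infty^n(\mathbb K)$, which is absurd. So nothing is shown in exactly the case where $\ell_1^2(\C)\nie\ell_\infty^n(\C)$ departs from the real embedding. A correct replacement runs as follows.
\begin{itemize}
\item Suppose $\|x\|_q=\max_k|\langle x,a_k\rangle|$ on $\mathbb K^2$, and let $S$ be the unit sphere. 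Then $S$ is the union of the finitely many closed sets $\{x\in S:|\langle x,a_k\rangle|=1\}$, so one of them has nonempty interior in $S$.
\item Let $d=\dim_\R\mathbb K$. The set $\{|\langle x,a_k\rangle|=1\}$ is a $(2d-1)$-manifold, foliated by the affine sets $\{\langle x,a_k\rangle=c\}$ of real dimension $d$.
\item By invariance of domain, $S$ contains a flat piece of dimension $d\ge 2$. Hence the unit ball has a face of dimension at least $2$.
\end{itemize}
This contradicts strict convexity for $1<q<\infty$. For $q=1$ it contradicts the fact that every proper face of the $\ell_1^2(\mathbb K)$ ball is a point or a segment.
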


The non-embeddability $\ell_1^2(\C)\nie \ell_\infty^n(\C)$ for $n\geq 2$ ensured by Theorem~\ref{comm-thm-1} contrasts with the embedding $\ell^2_1(\R)\ie\ell_\infty^n(\R)$ given by \eqref{iso-1-infty}.

\smallskip
More recently, the embeddability of $\ell_q^m(\mathbb K)$ into $\ell_p^n(\mathbb K)$ was  investigated in \cite{ChHoPrRa23} in the setting of quasi-Banach spaces for $(q,p)\in(0,\infty)\times(0,1)$.

\begin{thm}(\!\!\cite[Theorem 1.3]{ChHoPrRa23})\label{comm-thm-2}
Let $m,n\in\N$ satisfy $2\leq m\leq n<\infty$. Then, the following assertions hold.
\begin{enumerate}
\item $\ell_q^m(\mathbb{C})\nie\ell_p^n(\mathbb{C})$\; for\; $(q,p)\in ((0,\infty)\setminus 2\mathbb{N})\times (0,1)$,\; $q\neq p$.
\item $\ell_q^m(\mathbb{R})\nie\ell_p^n(\mathbb{R})$\; for\; $(q,p)\in (0,\infty)\times (0,1)$,\; $q\neq p$.
\end{enumerate}
\end{thm}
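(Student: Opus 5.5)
Suppose $T:\ell_q^m(\mathbb K)\to\ell_p^n(\mathbb K)$ is a linear isometry, with $0<p<1$, $q\neq p$, and $\mathbb K=\R$ or $\C$. It suffices to take $m=2$, since $\ell_q^2(\mathbb K)$ embeds isometrically into $\ell_q^m(\mathbb K)$ by \eqref{simple_cases}. Write $e_1,e_2$ for the standard basis and set $a=Te_1$, $b=Te_2\in\mathbb K^n$. The isometry condition then reads
\[
\big(|s|^q+|t|^q\big)^{p/q}=\sum_{k=1}^n |s a_k+t b_k|^p\qquad\text{for all } s,t\in\mathbb K .
\]
Put $s=1$ and $t=x$. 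The left side becomes $(1+|x|^q)^{p/q}$. The right side becomes a finite sum of terms $|a_k+xb_k|^p$. Every index with $b_k\neq0$ contributes a term that is non-smooth in $x$ at the point $x_k=-a_k/b_k$. The plan is to compare the regularity of the two sides at these points.

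\textbf{Real case.} Take $\mathbb K=\R$. The left side $(1+|x|^q)^{p/q}$ is real-analytic on $\R\setminus\{0\}$. Near $x=0$ it behaves like $1+\frac pq|x|^q+\dots$.

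For the right side, group the indices according to the singular point $c=-a_k/b_k$ (where $b_k\neq 0$). Near $x=c$, the group with singular point $c$ contributes exactly $\big(\sum_{k:\,-a_k/b_k=c}|b_k|^p\big)\,|x-c|^p$. This coefficient is strictly positive, and the other terms are analytic near $c$. Since $0<p<1$, the function $|x-c|^p$ plus an analytic function is not even Lipschitz at $c$.

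\begin{itemize}
\item For $c\neq0$ this contradicts the analyticity of the left side. Hence every singular point equals $0$, that is, $a_k=0$ whenever $b_k\neq0$.
\item Then the right side is $\alpha+\beta|x|^p$ with $\beta=\sum_k|b_k|^p$. Expanding the left side as $x\to0$ gives $1+\frac pq|x|^q+o(|x|^q)$.
\item Matching the two expansions forces $p=q$, unless $\beta=0$. But $\beta=0$ is impossible, because it would give $b=0$ and contradict injectivity of $T$.
\end{itemize}

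There is one subtlety: for $q\ge 1$ with $q$ an even integer the left side is analytic at $0$, while $|x|^p$ is not. The contradiction still holds, since in that case the left side has no $|x|^p$ singularity at $0$ at all. This completes part (2).

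\textbf{Complex case.} Take $\mathbb K=\C$ and $x\in\C$. On the right, each term $|a_k+xb_k|^p$ with $b_k\ne0$ has an isolated singularity at $c_k=-a_k/b_k$ of the form $|b_k|^p|x-c_k|^p$. This singularity is radially symmetric and not $C^1$. The left side $(1+|x|^q)^{p/q}$ is real-analytic in $(\operatorname{Re}x,\operatorname{Im}x)$ away from $x=0$. The same localization argument therefore forces every $c_k=0$. Comparing the behavior at $0$, $|x|^p$ against $1+\frac pq|x|^q+\dots$, yields $p=q$.

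The only exception is when $|x|^q$ is itself real-analytic, i.e.\ $q\in2\N$. Then the left side is smooth everywhere, but the right side has a genuine $|x|^p$ singularity, since $b\ne0$. This again gives a contradiction. Hence, for $q\notin 2\N$, $q\ne p$, no isometry exists, which is part (1). In fact this argument seems to cover $q\in2\N$ as well. The theorem is only stated for $q\notin2\N$, so I would double-check whether some step actually needs this restriction, for instance an edge case I am mishandling.

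\textbf{Main obstacle.} The key step is to show rigorously that a finite sum $\sum_k|b_k|^p|x-c_k|^p$ plus an analytic part cannot be analytic, or cannot have $|x|^q$ behavior, at a point. The danger is cancellation between terms sharing the same singular point. Cancellation cannot occur, because all coefficients $|b_k|^p$ are positive. For the localization I would use a difference-quotient or Hölder-exponent estimate: the $p$-Hölder seminorm near $c$ is bounded below by the positive total coefficient, whereas an analytic function is Lipschitz there.
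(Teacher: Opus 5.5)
Your proof is correct. The paper itself gives no argument for this statement; it only quotes it from \cite{ChHoPrRa23}. The techniques it sketches for the related Schatten results (Cases~3 and~4) work along a real parameter $t$. They compare the regularity or expansion of $\|A+tB\|_p^p$ at one distinguished point ($t=0$ or $t=1$), using the Kato--Rellich theorem and, in Case~3, Clarkson's inequalities to first reduce to $q\le p$.

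You use two features special to the commutative setting. First, the ``eigenvalues'' $a_k+xb_k$ are affine, so each vanishes only simply, at $c_k=-a_k/b_k$, producing a cusp $|b_k|^p|x-c_k|^p$ that is not Lipschitz for $0<p<1$. Second, the coefficients $|b_k|^p$ are positive, so cusps at a common point cannot cancel. Localizing at the nonzero $c_k$, where $(1+|x|^q)^{p/q}$ is real-analytic, forces $a$ and $b$ to have disjoint supports. Then $\|a+xb\|_p^p=1+|x|^p$, and evaluating at $x=1$ gives $2=2^{p/q}$, i.e.\ $p=q$; this is a quicker finish than matching expansions at $0$.

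The paper's heavier framework is what survives in the noncommutative case. There, eigenvalue branches of $A+tB$ can vanish to order $r>1$ and contribute $|t-t_0|^{rp}$, so your simple non-Lipschitz localization is unavailable. This is presumably the source of exclusions such as $p\in\{1/k:k\in\N\}$ in Theorem~\ref{thm:Chatto-3}.

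Your suspicion about $q\in2\N$ is justified. No step of your complex-case argument uses $q\notin2\N$, so it actually proves $\ell_q^m(\C)\nie\ell_p^n(\C)$ for every $q\neq p$ with $0<p<1$, which is slightly stronger than the quoted statement. The key is that you let $x$ range over all of $\C$, so every nonzero $c_k\in\C$ lies where the left-hand side is real-analytic. If one only uses real $t$, indices with $c_k\notin\R$ contribute real-analytic terms along $\R$. For $q\in2\N$ the left side is also analytic at $t=0$, so nothing is detected on that line; this is presumably where the restriction in the quoted statement comes from. Equivalently, you could stay with real $t$ and apply your argument to $e_1+te^{i\theta}e_2$, choosing $\theta=\arg c_k$ so that the singular point becomes the real number $|c_k|\neq 0$.
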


\subsection{Embeddability of function spaces.}

In \cite{Ba32}, S. Banach characterized isometries of the function spaces $L_p([a,b])=L_p([a,b],\C, dx)$
 onto itself, where $1\leq p\neq2$. These results were later substantially extended by J.~Lamperti\cite{La58}, who developed a general framework for studying isometries on function spaces defined via integral functionals. We refer interested readers to \cite{Fleming-1, Fleming-2} for a detailed exposition on this topic.

The study of embeddings between different $L_p$-spaces has also seen significant progress, fostering deep interactions between Banach space theory, probability theory, the geometry of convex bodies, harmonic analysis, and combinatorics. The pioneering works of P.~Lévy \cite{Le37} and I.J.~Schoenberg \cite{Sc38} established strong connections between positive definite functions, stable random variables, and isometric embeddings of Banach spaces into $L_p$-spaces. H.P.~Rosenthal~\cite{Rosenthal} obtained fundamental structural results for subspaces of $L_p([0,1])$ with $1 \le p < 2$.
Subsequently, F. Delbaen, H. Jarchowa and A. Pe\l czy\'nski~\cite{Delbaen} characterized those subspaces of $L_p([0,1])$ that admit isometric embeddings into $\ell_p$ or finite-dimensional spaces $\ell_p^n$ for $0 < p < \infty$. It is also known that $L_q([0,1])\ie L_p([0,1])$ whenever $0 < p < q \le 2$ (see~\cite{Bret,Herz}). We refer interested readers to \cite{Johnson01,Pisier86} for a discussion of relevant topics.

The following result on the non-embeddability of a sequence space into a function space is key to our new result on $\cS_q(\hil)\nie\cS_p(\hil)$.

\begin{thm}(\!\!\cite[Theorem 2.1]{Dor_Israel})\label{Dor}
Let $1 < p < \infty$ and $\mu$ be a positive measure on $[0,1]$.
Then, $\ell_{q}^{2}(\R)\nie L_{p}([0, 1], \R, d\mu)$ for $q<p$ satisfying $(q,p)\in [1,2)\times(1,\infty)$ and for $q\neq p$ satisfying $(q,p)\in (2,\infty)\times(1,\infty)$.
\end{thm}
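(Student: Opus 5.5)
The plan is to test a putative isometry on a single one-parameter family of vectors and compare how the two sides of the norm identity behave near $t=0$. Suppose $T:\ell_q^2(\R)\to L_p([0,1],\R,d\mu)$ is a linear isometry. Set $f=Te_1$ and $g=Te_2$. Then for every $t\in\R$
\[
\varphi(t):=\int_0^1|f+tg|^p\,d\mu=(1+|t|^q)^{p/q}=1+\tfrac{p}{q}|t|^q+O(|t|^{2q}) .
\]
I will study the symmetric second difference $\Delta(t):=\varphi(t)+\varphi(-t)-2\varphi(0)$. By the identity above, $\Delta(t)=\frac{2p}{q}|t|^q+O(|t|^{2q})$. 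On the other side,
\[
\Delta(t)=\int D_t\,d\mu,\qquad D_t:=|f+tg|^p+|f-tg|^p-2|f|^p\ge0 ,
\]
where $D_t\ge0$ by convexity of $s\mapsto|s|^p$ for $p>1$.

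\emph{Case $1\le q<2$, $q<p$.} The goal is the bound $\Delta(t)=o(|t|^q)$, which contradicts $\Delta(t)\sim\frac{2p}{q}|t|^q$.
\begin{itemize}
\item If $p\ge2$: a Taylor estimate gives $D_t\le C_p\,t^2\big(|f|^{p-2}g^2+|t|^{p-2}|g|^p\big)$. Integrating with Hölder, using $f,g\in L_p$, yields $\Delta(t)=O(t^2)$. Since $q<2$, this is $o(|t|^q)$.
\item If $1<p<2$: I use the elementary pointwise bound $D_t\le C_p|t|^p|g|^p$. It holds because the second difference of the $p$-th power, with $p<2$, is dominated by the $p$-th power of the increment. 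Integrating gives $\Delta(t)=O(|t|^p)=o(|t|^q)$, since $q<p$.
\end{itemize}
(For $q=1$ one can argue even more simply: $\varphi$ is $C^1$ when $p>1$, while $(1+|t|)^{p}$ has a corner at $0$.) The main technical point is verifying the elementary inequality $|a+b|^p+|a-b|^p-2|a|^p\le C_p|b|^p$ for $1<p<2$. I would prove it by homogeneity: it reduces to $|1+s|^p+|1-s|^p-2\le C_p|s|^p$. This is a bounded-ratio statement, since the left side is $O(s^2)$ near $0$ and $O(|s|^p)$ at infinity.

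\emph{Case $q>2$, $q\neq p$.} Now $\Delta(t)=O(|t|^q)=o(t^2)$. Pointwise, $D_t/t^2\to p(p-1)|f|^{p-2}g^2$ on $\{f\neq0\}$, and $D_t\ge0$ everywhere. Fatou's lemma therefore gives
\[
p(p-1)\int_{\{f\ne0\}}|f|^{p-2}g^2\,d\mu\le\liminf_{t\to0}\frac{\Delta(t)}{t^2}=0 .
\]
Hence $g=0$ $\mu$-a.e. on $\{f\neq0\}$, so $f$ and $g$ are disjointly supported. It follows that $\varphi(t)=\|f\|_p^p+|t|^p\|g\|_p^p=1+|t|^p$. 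This contradicts $\varphi(t)=(1+|t|^q)^{p/q}$ for $q\neq p$: for instance, compare the two expressions at $t=1$, namely $2$ versus $2^{p/q}$.

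The step I expect to require the most care is the Fatou/limit argument when $1<p<2$, where $|f|^{p-2}$ blows up near $\{f=0\}$. This is handled by restricting to $\{f\neq0\}$ and relying only on the nonnegativity of $D_t$. The Taylor and pointwise estimates in the first case are routine.
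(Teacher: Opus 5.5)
Your proof is correct, including the steps you single out as delicate. The paper does not prove this statement itself. It quotes Dor's Theorem~2.1, which is stated for Lebesgue measure, and only asserts that ``the same proof applies to any positive measure~$\mu$.''

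Your argument is self-contained and uses nothing about $\mu$ beyond H\"older's inequality and Fatou's lemma. It therefore proves the statement on an arbitrary measure space. This is the generality the paper actually needs, because the measure supplied by Theorem~\ref{Otte} need not be Lebesgue measure.

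For the record, the individual estimates check out:
\begin{itemize}
\item For $p\ge2$, your Taylor bound follows from $|a\pm\theta b|^{p-2}\le 2^{p-2}(|a|^{p-2}+|b|^{p-2})$ for $\theta\in[0,1]$, together with H\"older's inequality with exponents $p/(p-2)$ and $p/2$.
\item For $1<p<2$ you can take $C_p=2$, with no compactness argument, since $\frac12(|a+b|^p+|a-b|^p)\le\big(\frac12(|a+b|^2+|a-b|^2)\big)^{p/2}=(a^2+b^2)^{p/2}\le|a|^p+|b|^p$.
\item Bernoulli's inequality gives the exact lower bound $\Delta(t)\ge\frac{2p}{q}|t|^q$, because $p/q>1$.
\end{itemize}

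Structurally, your argument is the commutative analogue of the second-derivative method the paper sketches for Schatten classes (Cases 5 and 6 there).
\begin{itemize}
\item Comparing $\Delta(t)$ with $|t|^q$ plays the role of matching $\frac{p}{q}t^{q-2}$ against $\frac{d^2}{dt^2}\|A+tB\|_p^p|_{t=0}$.
\item Applying Fatou's lemma only on $\{f\neq0\}$, which is legitimate because $D_t\ge0$, replaces both Lemma~\ref{lem:nonzero derivative} and the perturbation $A+xI$ used there when $A$ is singular.
\end{itemize}

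One difference from the finite-dimensional setting is instructive. For $1<p<2$ you only get $\Delta(t)=O(|t|^p)$, not $O(t^2)$, because $|f|^{p-2}g^2$ need not be integrable near $\{f=0\}$. This is exactly why the hypothesis $q<p$ is needed in that range, and it cannot be dropped. For $1<p\le q<2$ one does have $\ell_q^2(\R)\ie L_p$, via the embedding $L_q\ie L_p$ for $p<q\le2$ recalled in the paper. So your $O(|t|^p)$ bound is sharp.
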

The above Theorem~\ref{Dor} was established in \cite{Dor_Israel} for $\mu$ equal to the Lebesgue measure; however, the same proof applies to any positive measure~$\mu$.

\subsection{Embeddability of Schatten classes.}

 In this subsection, we discuss affirmative and negative results for Problem \ref{prob-0} (2), (3).

\begin{thm}\label{simple_thm}
Let $m,n\in\N$ satisfy $2 \leq m \leq n$ and let $\dim(\hil)=\infty$.
Then,
\begin{align}
\label{simple_cases2}
&\ell_p^m(\C)\ie\cS_p^m,\quad \ell_p(\C)\ie\cS_p(\hil),\quad
\cS_p^m\ie\cS_p^n\ie\cS_p(\hil) \quad\text{for all }  p\in (0,\infty],\\
\label{Schatten-2-case}
& \cS_2^m\ie \ell_2^{m^2}(\C)\ie\cS_p^{m^2} \quad\text{for all }  p\in (0,\infty],\\
\label{Schatten-2-case_infty}& \cS_2(\hil)\ie \ell_2(\C)\ie\cS_p(\hil) \quad\text{for all }  p\in (0,\infty].
\end{align}
\end{thm}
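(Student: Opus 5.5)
The embeddings in \eqref{simple_cases2} are all realized by explicit maps whose isometric property follows from the definition of $\|\cdot\|_p$ through singular values. Fix an orthonormal basis $(e_k)$ of $\C^m$, $\C^n$ or $\hil$, as appropriate.

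For $\ell_p^m(\C)\ie\cS_p^m$ and $\ell_p(\C)\ie\cS_p(\hil)$ we use the diagonal map $(x_k)\mapsto \sum_k x_k\, e_k\otimes e_k$. The singular values of a diagonal operator are the numbers $|x_k|$, rearranged nonincreasingly. Hence $\|\cdot\|_p$ is preserved for $p<\infty$, and the operator norm $\sup_k|x_k|$ is preserved for $p=\infty$. In the infinite case, the image of a sequence in $\ell_p$ (or of a null sequence for $p=\infty$) is compact, so it lies in $\cS_p(\hil)$. For the $\ell_\infty$ case we read $\ell_\infty$ inside $\cS_\infty(\hil)$ as $c_0$, since a diagonal operator is compact exactly when its entries tend to zero. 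I will remark on this point, because literally $\ell_\infty(\C)$ is not in the compact operators.

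For $\cS_p^m\ie\cS_p^n\ie\cS_p(\hil)$ we use the corner embedding $T\mapsto T\oplus 0$, extending by zero on the orthogonal complement of the span of the first $m$ (respectively $n$) basis vectors. We have $|T\oplus 0|=|T|\oplus 0$, so the nonzero singular values are unchanged, and all the norms and quasi-norms agree. Linearity is clear in every case.

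For \eqref{Schatten-2-case} and \eqref{Schatten-2-case_infty}, the first arrow is the identification of $\cS_2$ with the Hilbert–Schmidt space. The identity
\[
\|T\|_2^2=\Tr(T^*T)=\sum_{j,k}|\la Te_j,e_k\ra|^2
\]
shows that $T\mapsto(\la Te_j,e_k\ra)_{j,k}$ is a linear isometry. In the finite case it maps $\cS_2^m$ onto $\ell_2^{m^2}(\C)$. In the infinite case we enumerate $\N\times\N$ as $\N$, which gives an isometry into $\ell_2(\C)$.

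The second arrow needs $\ell_2^{N}(\C)\ie\cS_p^{N}$ for every $p$, and $\ell_2\ie\cS_p(\hil)$. This is the only nontrivial step, since the diagonal map gives $\ell_p$, not $\ell_2$. I plan to use anticommuting (CAR / Clifford-type) matrices. Take $u_1,\dots,u_{N}$ self-adjoint unitaries with $u_iu_j+u_ju_i=2\delta_{ij}I$. These are realized by Jordan–Wigner matrices on $(\C^2)^{\otimes r}$, which is finite-dimensional of dimension $2^r$ and hence embeds into $\cS_p(\hil)$.

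For real coefficients, $T=\sum x_iu_i$ satisfies $T^2=\|x\|_2^2 I$, so every singular value equals $\|x\|_2$. For complex coefficients we instead use the fermionic annihilation operators $a_i$, with $a_ia_j^*+a_j^*a_i=\delta_{ij}$ and $a_ia_j+a_ja_i=0$. Then $T=\sum x_ia_i$ satisfies $T^*T+TT^*=\|x\|_2^2 I$ and $T^2=0$. From these relations, $T^*T$ and $TT^*$ are orthogonal projections scaled by $\|x\|_2^2$, each of rank $2^{r-1}$ independently of $x$. Therefore
\[
\|T\|_p=2^{(r-1)/p}\,\|x\|_2 ,
\]
and after normalization this is a linear isometry $\ell_2^N(\C)\ie\cS_p^{2^r}$ for $p<\infty$; for $p=\infty$ no normalization is needed.

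This construction has two drawbacks. It lands in dimension $2^r$ rather than $m^2$, and the infinite version fails because the rank $2^{r-1}$ is not finite. So I would abandon it in favor of a simpler rank-one model.

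The simpler model maps $x=(x_1,\dots,x_N)$ to the rank-one operator $e_1\otimes\big(\sum_k \overline{x_k}\,e_k\big)$, that is, the operator $h\mapsto \big\la h,\sum_k \overline{x_k}e_k\big\ra e_1$. Equivalently, this is the matrix whose first row is $x$. This map is linear in $x$. Its only nonzero singular value is $\|x\|_2$, so $\|\cdot\|_p=\|x\|_2$ for every $p\in(0,\infty]$.

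This gives $\ell_2^{m^2}(\C)\ie\cS_p^{m^2}$ directly. In infinite dimensions, $e_1\otimes v$ is compact, even of finite rank, so we also get $\ell_2(\C)\ie\cS_p(\hil)$. Composing with the Hilbert–Schmidt identification completes \eqref{Schatten-2-case} and \eqref{Schatten-2-case_infty}.

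The main obstacle was recognizing this row embedding. Once it is in hand, everything else is routine bookkeeping on singular values.
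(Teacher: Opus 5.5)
Your proposal is correct and, once you set aside the CAR detour, it is essentially the paper's proof: diagonal and corner embeddings for \eqref{simple_cases2}, the Hilbert--Schmidt coordinate identification (with an enumeration of $\N\times\N$ in the infinite case), and the first-row, rank-one map whose only nonzero singular value is $\|x\|_2$. Your caveat at $p=\infty$ is also right: $\cS_\infty(\hil)$, the compact operators on a separable $\hil$, is separable, while $\ell_\infty(\C)$ is not, so $\ell_\infty(\C)\nie\cS_\infty(\hil)$. That case of \eqref{simple_cases2} therefore has to be read with $c_0$ in place of $\ell_\infty$, a point the paper's ``immediate'' glosses over.
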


\begin{proof}
All embeddings in \eqref{simple_cases2} are immediate.

The embedding in \eqref{Schatten-2-case} was noted in \cite{ChHoPaPrRa22}  for $p=\infty$, and the same embedding map also works for every $p>0$, as observed later by S.K.~Ray. It can be justified as follows.

Note that $\cS_2^m\ie\ell_2^{m^2}(\C)$ via the identification
\[
[a_{ij}]_{m\times m}\mapsto(a_{ij})_{i,j=1}^m.
\]
Indeed, for $A=[a_{ij}]\in\cS_2^m$,
\[
\|A\|_2^2=\Tr(A^*A)=\sum_{i,j=1}^m |a_{ij}|^2 =\|(a_{ij})_{i,j=1}^m\|_2^2.
\]
Next, we demonstrate that the embedding $\ell_2^{m^2}(\C)\ie\cS_p^{m^2}$ for every $p\in (0,\infty]$ is realized by the map
\begin{align}\label{samya}
	(a_1,\ldots,a_{m^2})\mapsto
	\begin{bmatrix}
		a_1&a_2&\cdots&a_{m^2}\\
		0&0&\cdots&0\\
		\vdots&\vdots&\cdots&\vdots\\
		0&0&\cdots&0
	\end{bmatrix}.
\end{align}
Let $a=(a_1,\dots,a_{m^2})\in\ell_2^{m^2}(\C)$ and denote by $T(a)$
the matrix in \eqref{samya}. By a direct computation,
\[
|T(a)^*|=\big(T(a)T(a)^*\big)^{\frac12}={\rm diag}\Big(\Big(\sum\limits_{k=1}^{m^2}|a_k|^2\Big)^{\frac12},0,\dots,0\Big).
\]
Consequently,
\[\|T(a)\|=\|T(a)^*\|=\|\, |T(a)^*|\, \|=\|(a_1,\dots,a_{m^2})\|_2\]
and, for every $p\in (0,\infty)$,
\[
\|T(a)\|_p^p=\|T(a)^*\|_p^p= \Tr(|T(a)^*|^p)=\|(a_1,\dots,a_{m^2})\|_2^p.
\]
Thus, $T$ is an isometric embedding of $\ell_2^{m^2}(\C)$ into
$\cS_p^{m^2}$ for every $p\in (0,\infty]$.

Combining $\cS_2^m\ie\ell_2^{m^2}(\C)$ and $\ell_2^{m^2}(\C)\ie\cS_p^{m^2}$ yields \eqref{Schatten-2-case}.

\smallskip

An isometry for the inclusion \eqref{Schatten-2-case_infty} is constructed as follows. Let $f:\N\times\N\to \N$ be a bijective map and let $\{e_i\}_{i\in\N}$ be an orthonormal basis of $\hil$. The embedding $\cS_2(\hil)\ie\ell_2(\C)$ is implemented by the isometry $A\mapsto (a_{f(i,j)})_{i,j=1}^{\infty}$, where $a_{f(i,j)}=\langle A e_j, e_i\rangle$, and the embedding $\ell_2(\C)\ie \cS_p(\hil)$ by the isometry $(a_j)\mapsto B$, where $B$ is given by $\langle Be_j,e_i\rangle=\delta_{1i}\, a_j$. Combining these two embeddings yields \eqref{Schatten-2-case_infty}.
\end{proof}

The embedding $\cS_2^m\ie\cS_p^{m^2}$ ensured by \eqref{Schatten-2-case} contrasts with the non-embeddability $\ell_2^m(\mathbb K)\nie\ell_p^n(\mathbb K)$ given by Theorem~\ref{comm-thm-1} for any $2\le m\le n$ and $p\in2\N+1$.

For a wide range of parameters $(q,p)$, negative results concerning Problem~\ref{prob-0} (2), (3) are obtained in \cite{Gupta, Ra20, ChHoPaPrRa22, ChHoPrRa23}. In the sequel, we discuss the main ideas and sketch the proofs of these results.

\begin{thm}(\!\!\cite[Theorems 1.1 and 1.3]{Ra20}\label{thm:Ray})
Let $m \geq 2$ and $\dim(\hil)\geq 2$. Then, $$\ell_1^2(\C)\nie\cS_p(\hil),\quad \cS_1^m\nie\cS_p(\hil),\quad \cS_1(\hil)\nie\cS_p(\hil)\quad\text{for all }p \in (1,\infty].$$
\end{thm}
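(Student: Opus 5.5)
First I reduce everything to the first assertion. Since $\ell_1^2(\C)\ie\cS_1^m$ (diagonal matrices, as in \eqref{simple_cases2}) and $\cS_1^m\ie\cS_1(\hil)$, any isometric embedding of $\cS_1^m$ or $\cS_1(\hil)$ into $\cS_p(\hil)$ would restrict to one of $\ell_1^2(\C)$. So it suffices to show $\ell_1^2(\C)\nie\cS_p(\hil)$ for $p\in(1,\infty]$. Suppose $T:\ell_1^2(\C)\to\cS_p(\hil)$ is a linear isometry, and put $A=T(e_1)$, $B=T(e_2)$. Then
\[
\|zA+wB\|_p=|z|+|w|\qquad\text{for all } z,w\in\C .
\]
In particular $\|A\|_p=\|B\|_p=1$ and $\|A+e^{i\theta}B\|_p=2$ for every $\theta\in\R$.

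For $1<p<\infty$ I use strict convexity. The space $\cS_p$ is uniformly convex, so equality in $\|A+B\|_p\le\|A\|_p+\|B\|_p$ with $\|A\|_p=\|B\|_p=1$ forces $A=B$. Then $\|A-B\|_p=0$, which contradicts $\|A-B\|_p=2$. If one wants to avoid citing uniform convexity, the same conclusion comes from the equality case of the Hölder inequality for the norming functional. Take $X=|A+B|^{p-1}U^*/\|A+B\|_p^{p-1}$, where $A+B=U|A+B|$ is the polar decomposition. Then $\Tr(XA)=\Tr(XB)=1$ with $\|X\|_{p'}=1$. Equality in Hölder in $\cS_p$ determines the operator uniquely, which again gives $A=B$.

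The main obstacle is $p=\infty$, i.e.\ compact operators with the operator norm, where strict convexity fails. I would proceed as follows.
\begin{itemize}
\item[\textbf{1.}] By compactness, $\|A+e^{i\theta}B\|=2$ is attained for each $\theta$: there are unit vectors $x_\theta,y_\theta$ with $\la (A+e^{i\theta}B)x_\theta,y_\theta\ra=2$.
\item[\textbf{2.}] Since $\|A\|=\|B\|=1$, this forces $\la Ax_\theta,y_\theta\ra=1$ and $e^{i\theta}\la Bx_\theta,y_\theta\ra=1$.
\item[\textbf{3.}] Hence $Ax_\theta=y_\theta$ and $Bx_\theta=e^{-i\theta}y_\theta$. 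So $x_\theta$ lies in the maximal-singular-value space $E=\ker(A^*A-1)$, which is finite-dimensional because $A$ is compact. On $E$, $A$ acts as an isometry, and $A^*B x_\theta=e^{-i\theta}x_\theta$.
\item[\textbf{4.}] Let $P$ be the projection onto $E$ and consider the finite matrix $M=PA^*BP$. It has norm at most $1$, and by Step~3 every unimodular number $e^{-i\theta}$ is one of its eigenvalues.
\item[\textbf{5.}] A finite matrix has only finitely many eigenvalues, while $\theta$ ranges over all of $\R$. This is a contradiction.
\end{itemize}
This compactness and finite-dimensionality argument is the step I expect to need the most care. In particular, Step~3 must be checked carefully: $\la Ax,y\ra=1$ with $\|A\|\le1$ and unit vectors really does force $Ax=y$, by the equality case of Cauchy--Schwarz.
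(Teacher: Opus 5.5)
Your proposal is correct. The reduction to $\ell_1^2(\C)$ and the case $1<p<\infty$ match the paper in substance. The paper cites \cite[Proposition~4.1]{Bo19}: equality $\|A+zB\|_p=\|A\|_p+\|B\|_p$ for unimodular $z$ forces linear dependence. That is a form of the strict convexity of $\cS_p$, which you invoke directly with $z=1$. One trivial fix: for $\cS_1(\hil)$, use $\ell_1^2(\C)\ie\cS_1(\hil)$ directly, because $\cS_1^m\ie\cS_1(\hil)$ requires $\dim\hil\ge m$.

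For $p=\infty$, your route differs from the one the paper sketches after \cite{Ra20}. That sketch uses unitary invariance and a Gram--Schmidt type orthogonalization to pass to an isometry with $\tilde A=J_1(e_1)$ positive semidefinite and diagonal. It then shows that every diagonal entry of $\tilde A$ is strictly less than $1$, contradicting $\|\tilde A\|=1$.

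You need no such normal form. You use compactness twice: to attain $\|A+e^{i\theta}B\|=2$ at unit vectors, and to make $E=\ker(A^*A-I)$ finite-dimensional. The equality cases in the triangle and Cauchy--Schwarz inequalities then put every $x_\theta$ in $E$ with $PA^*BP\,x_\theta=e^{-i\theta}x_\theta$. So an operator on a finite-dimensional space would have a continuum of eigenvalues, which is impossible. This is close in spirit to the projection onto the $\pm1$-eigenspace of $A$ used in Case~2 of the paper's sketch.

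What your argument buys:
\begin{itemize}
\item It treats finite and infinite $\dim\hil$ uniformly.
\item It gives the quantitative by-product that $\|A+e^{i\theta}B\|=\|A\|+\|B\|$ can hold for at most $\dim E$ values of $\theta$ modulo $2\pi$.
\item It shows exactly where complex scalars enter. Over $\R$ an isometry only forces the phases $\pm1$, which is consistent with $\ell_1^2(\R)\ie\ell_\infty^2(\R)$.
\end{itemize}
The normalization of \cite{Ra20}, in turn, puts the problem in the concrete matrix form (diagonal $A$) that the paper uses for the other pairs $(q,p)$.
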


\begin{proof}[Sketch of the proof.]
	Suppose that there exists an isometry
	\[
	J_0 : \ell_1^2(\C)\ie\cS_\infty(\hil),
	\qquad
	J_0(e_1)=A,\; J_0(e_2)=B ,
	\]
	where $e_1=(1,0)$ and $e_2=(0,1)$.
	It is shown in \cite{Ra20} that the existence of such an isometry, along with the unitary invariance of the operator norm and a Gram--Schmidt type orthogonalization argument, implies the existence of another isometry
	\[
	J_1 : \ell_1^2(\C) \ie \cS_\infty(\hil),
	\qquad
	J_1(e_1)=\tilde A,\; J_1(e_2)=\tilde B ,
	\]
	where $\tilde A$ is a positive semidefinite diagonal operator.
Moreover, the analysis in \cite{Ra20} shows that all diagonal entries of $\tilde A$ are strictly less than~$1$, which contradicts the isometric condition $\|\tilde A\| =\|e_1\|_1=1$. This contradiction shows that $\ell_1^2(\C)\nie\cS_\infty(\hil)$.

Suppose that for a given $p\in (1,\infty)$,  there exists an isometry $J:\ell_1^2(\C)\ie\cS_p(\hil)$. Let $A=J(e_1)$ and $B=J(e_2)$. By properties of an isometry, $A$ and $B$ are linearly independent and $\|A\|_p=\|B\|_p=1$. Since $J$ is an isometry, for every $z\in\C$ with $|z|=1$,
\[\|A+zB\|_p=\|(1,z)\|_1=2=\|A\|_p+\|B\|_p.\]
Hence, by \cite[Proposition~4.1]{Bo19}, $A$ and $B$ are linearly dependent, which is a contradiction. Therefore, $\ell_1^2(\C)\nie\cS_p(\hil)$ for $1<p<\infty$.
	
By \eqref{simple_cases2}, $\ell_1^2(\C)\ie\cS_1(\hil)$ for an arbitrary Hilbert space $\hil$ with ${\rm dim}(\hil)\ge 2$. The latter along with $\ell_1^2(\C)\nie\cS_p(\hil)$ for $p \in (1,\infty]$ implies $\cS_1^m\nie\cS_p(\hil)$ and $\cS_1(\hil)\nie\cS_p(\hil)$.
\end{proof}

The result of Theorem \ref{thm:Ray} with $p=\infty$ under the additional assumption $\dim(\hil)<\infty$ was obtained in \cite{Gupta}.


The methods developed in \cite{Gupta,Ra20} are insufficient to study Problem \ref{prob-0} for general pairs
\[
(q,p)\in (0,\infty]\times (0,\infty].
\]
The general case was addressed in \cite{ChHoPaPrRa22, ChHoPrRa23}, where completely new techniques were introduced, in particular the application of the Kato--Rellich theorem and methods based on multilinear operator integration. The following results on non-embeddability are obtained in these two papers.

\begin{thm}(\!\!\cite[Theorem 1.1]{ChHoPaPrRa22})\label{thm:Chatto-1}
Let $2\leq m\leq n$. Then, $\cS_q^m\nie\cS_p^n$ for $q\neq p$ satisfying

\noindent
$(q,p)\in\big((1,\infty]\setminus\{2\}\times(1,\infty)\big)\bigcup
\big((1,\infty)\setminus\{2,3\}\times\{1\}\big) \bigcup \big((1,\infty)\setminus\{2\}\times\{\infty\}\big)$.
\end{thm}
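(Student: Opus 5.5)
Since $\ell_q^2(\C)\ie\cS_q^m$ by \eqref{simple_cases2}, it suffices in most cases to show $\ell_q^2(\C)\nie\cS_p^n$. I would do this by a local expansion argument. Suppose $J:\cS_q^m\ie\cS_p^n$ is an isometry. Take two matrices with disjoint supports in $\cS_q^m$, say $E_{11}$ and $E_{22}$, and set $A=J(E_{11})$, $B=J(E_{22})$. Then for all real $t$ and $z\in\C$ we have the identity
\[
\|A+tB\|_p^p=(1+|t|^q)^{p/q}.
\]
More generally, we can take $E_{11}$ together with a perturbation $X$ such that the function $t\mapsto\|E_{11}+tX\|_q^q$ has a known expansion at $t=0$. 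Its expansion contains a nonanalytic term $|t|^q$, or terms with specific low-order coefficients.

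The key step is to compute the asymptotic expansion of $\varphi(t)=\|A+tB\|_p^p=\Tr\big(((A+tB)^*(A+tB))^{p/2}\big)$ as $t\to0$ in $\cS_p^n$. This is where the Kato--Rellich theorem enters. The family $(A+tB)^*(A+tB)$ is a real-analytic self-adjoint matrix family, so its eigenvalues $\lambda_j(t)$ can be chosen analytic in $t$. Hence
\[
\varphi(t)=\sum_j\lambda_j(t)^{p/2}.
\]
For eigenvalues with $\lambda_j(0)>0$ this is analytic. For eigenvalues vanishing at $0$ we have $\lambda_j(t)=c_jt^{2k_j}+\dots$ with $c_j>0$ (by nonnegativity), which contributes terms of the form $|t|^{pk_j}\cdot(\text{analytic})$.

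So $\varphi$ is a finite sum of an analytic function and functions of the form $|t|^{p k}h_k(t)$. Compare this with the left side $(1+|t|^q)^{p/q}=1+\tfrac pq|t|^q+\dots$, which has a nonzero $|t|^q$ term.
\begin{itemize}
\item If $q\notin 2\N\cup p\N$, a term $|t|^q$ cannot arise from $\varphi$, since the exponents available there are even integers or multiples of $p$. This gives a contradiction.
\item If $q\in 2\N$ or $q\in p\N$, we use the full equality of expansions. For instance, by varying the phase $z$ in $A+tzB$ and comparing the coefficient of $t^2$, we can force constraints (a "Pythagorean" second-order term for $p$ versus none for $q>2$).
\end{itemize}
For $q=\infty$ I would instead use $\|E_{11}+tE_{22}\|_\infty=1$ for $|t|\le1$. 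Then $\varphi$ is constant on an interval, hence analytic-constant, which forces $B$ to be degenerate against $A$; the second-derivative terms are strictly positive when $p>1$.

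The cases $p=1$ and $p=\infty$ need separate care because $\varphi$ is then not smooth in the same way:
\begin{itemize}
\item For $p=1$, $\|A+tB\|_1=\sum_j\sqrt{\lambda_j(t)}$ contributes terms of the form $|t|\cdot$analytic. Matching $|t|^q$ against $|t|$ and analytic terms works unless $q$ is an integer; this is why $q=3$ (and $2$) is excluded, and the integer case $q\geq4$ is handled by comparing higher coefficients.
\item For $p=\infty$, use $\|A+tB\|=\max_j\sqrt{\lambda_j(t)}$, which is piecewise analytic near $0$. Then $(1+|t|^q)$ would have to agree with an analytic function for $t>0$, and this is impossible for $q\notin\N$. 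For integer $q\ne2$ one compares $t>0$ with $t<0$ and the phase dependence.
\end{itemize}

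I expect the main obstacle to be the coefficient-matching in the integer cases ($q\in 2\N$ or $q\in p\N$). There the crude exponent argument fails, and one needs to extract explicit second- and higher-order coefficients of $\varphi$ in terms of $A,B$. For the second-order coefficients I would try the second-order perturbation formula (a double operator integral expression). Positivity of those coefficients then contradicts the vanishing $t^2$ coefficient of $(1+|t|^q)^{p/q}$ when $q>2$. For $q<2$ the comparison goes the other way: the $t^2$ term of $\varphi$ must vanish while that of the right-hand side involves the nonzero $|t|^q$ term.
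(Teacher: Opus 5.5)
Your framework matches the paper's in the main range $1<p<\infty$: reduce to $\ell_q^2(\C)$, use Rellich--Kato analyticity (via $(A+tB)^*(A+tB)$ instead of the paper's Hermitian dilation, which is a harmless variant), and compare with $(1+|t|^q)^{p/q}$ through a strictly positive second-order term. Your analyticity argument for non-integer $q$ at $p\in\{1,\infty\}$ is also the paper's. The genuine gap is the integer cases at the endpoints, where comparing local structure at $t=0$ forces nothing and you give no working substitute.

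For $p=1$, $\|A+tB\|_1$ has the form $g_0(t)+|t|g_1(t)$ with $g_0,g_1$ analytic. But so does $(1+|t|^q)^{1/q}$ for every integer $q$, and for even $q$ it is outright analytic, so no coefficient mismatch is forced. Your fallback, strict positivity of the $t^2$-coefficient, is unavailable because the trace norm is not strictly convex; Lemma~\ref{lem:nonzero derivative} explicitly excludes $p=1$. The paper instead uses a global, genuinely complex tool: the $2$-uniform PL-convexity of $\cS_1^{2n}$, combined with the identity $\|A+zB\|_1=(1+|z|^q)^{1/q}$ for complex $z$. This yields some $c\ge 1/2$ with $(1+c)^q\le 4$, which is impossible for $q\ge 4$. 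That is also the actual reason $q=3$ is excluded, since $(3/2)^3<4$. Your explanation of that exclusion does not follow from your argument, which treats $q=3$ and $q=4$ identically.

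The same problem arises for $p=\infty$ and integer $q\ge 3$. Here $(1+t^q)^{2/q}$ is analytic at $0$, so it can coincide with an analytic eigenvalue branch for $t>0$. For even $q$ it is also even in $t$, and $\|A+e^{i\theta}tB\|$ does not depend on $\theta$. So comparing $t>0$ with $t<0$, or rotating phases, produces no contradiction. The missing idea is the paper's eigenspace argument. Take $A,B$ self-adjoint (after dilation) and let $P$ be the projection onto $\mathcal{E}=\mathrm{span}\{x: Ax=\pm x\}$. For a unit $x\in\mathcal{E}$ and a suitable sign of $t$,
\[
(1+t^2\|Bx\|^2)^q\le \|(A+tB)x\|^{2q}\le (1+|t|^q)^2 .
\]
For small $t$ and $q>2$ this forces $Bx=0$, so $B=(I-P)B(I-P)$. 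Since $\|(I-P)A(I-P)\|<1$, it follows that $\|A+tB\|=1$ for small $t\neq 0$, contradicting \eqref{eq:scal-1}. A second-order analysis of the top eigenvalue branch could reach the same conclusion, but you would have to carry it out.

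Two smaller corrections:
\begin{enumerate}
\item The terms $|t|^{pk}h_k(t)$ contribute exponents $pk+j$ (or $pk+2j$ after symmetrizing), not only multiples of $p$ and even integers. So your first bullet fails, for example, for $p=3/2$, $q=7/2$. This is harmless only because the second-derivative argument covers all $q>2$.
\item Strict positivity of $\frac{d^2}{dt^2}\|A+tB\|_p^p\big|_{t=0}$ does not follow from convexity. For $p>2$ it vanishes when $B$ lives on $\ker A$; that case must be excluded using the isometry identity, which would then read $1+|t|^p\|B\|_p^p$. For $1<p<2$ with singular $A$ you must handle the $|t|^p$ terms; the paper does this by perturbing $A$ to $A+xI$. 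This is exactly the content of Lemma~\ref{lem:nonzero derivative}, which you assume rather than prove.
\end{enumerate}
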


\begin{thm}(\!\!\cite[Theorem 1.3]{ChHoPrRa23})\label{thm:Chatto-3}
	Let $2\leq m\leq n$.
	\begin{enumerate}
		\item $\cS_q^m\nie\cS_p^n$ for $(q,p)\in(0,2)\setminus\{1\}\times (0,1)$, $q\neq p$.
		\item Let $(q,p)\in [2,\infty)\times (0,1).$ Then there is no isometric embedding $T:S_q^m\to S_p^n$ with\\ $T(\text{diag}(1,0,\ldots,0))=A,$ and $T(\text{diag}(0,1,\ldots,0))=B$ such that
\begin{itemize}
\item  $A,B$ are $n\times n$ self-adjoint matrices, 
\item either $A\geq 0$ or $A\leq 0$.
\end{itemize}
\item $\cS_1^m\nie\cS_p^n$ for $p\in(0,1)\setminus\{\frac{1}{k}:k\in\mathbb{N}\}$.
\item $\cS_\infty^m\nie\cS_p^n$ for $p\in(0,1)\setminus\{\frac{1}{k}:k\in\mathbb{N}\}$.
\end{enumerate}
\end{thm}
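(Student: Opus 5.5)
Since the statement is quoted from \cite{ChHoPrRa23}, I would reconstruct its proof by the perturbative method that the introduction attributes to \cite{ChHoPaPrRa22, ChHoPrRa23}. Suppose $T:\cS_q^m\to\cS_p^n$ is an isometry with $p\in(0,1)$. Restricting $T$ to the span of $E_{11}=\mathrm{diag}(1,0,\ldots,0)$ and $E_{22}=\mathrm{diag}(0,1,0,\ldots,0)$ gives $A=T(E_{11})$ and $B=T(E_{22})$ satisfying
\[
\|A+tB\|_p^p=(1+|t|^q)^{p/q}\quad\text{for all }t\in\R .
\]
(For $q=\infty$ the right-hand side is $\max(1,|t|)^p$.) The plan is to compare the small-$t$ asymptotics of the two sides and obtain a contradiction.

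\textbf{Reduction to self-adjoint pairs.} For part (2) self-adjointness and $A\ge 0$ or $A\le0$ are assumed. For parts (1), (3) and (4) I would reduce to this situation with a Ray-type device. One option is to pass to the Hermitian dilations
\[
\begin{bmatrix}0&A\\ A^*&0\end{bmatrix},
\]
which multiply the $p$-norms by $2^{1/p}$. Another is to use the polar decomposition and unitary invariance to make $A$ positive diagonal.

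\textbf{Expansion of the left-hand side.} For self-adjoint $A$, analytic perturbation theory (Kato--Rellich) gives eigenvalues $\lambda_j(t)$ of $A+tB$ that are real-analytic in $t$. Then
\[
\|A+tB\|_p^p=\sum_j|\lambda_j(t)|^p .
\]
Eigenvalues with $\lambda_j(0)\neq0$ contribute a real-analytic function of $t$. Eigenvalues with $\lambda_j(0)=0$ have $\lambda_j(t)=c_jt^{k_j}+\cdots$, so they contribute terms of the form $|t|^{pk_j}$ times analytic functions.

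\textbf{Expansion of the right-hand side and matching.} The right-hand side equals $1+\tfrac pq|t|^q+O(|t|^{2q})$. The terms $|t|^{pk}$ arising from $\ker A$ must therefore cancel against $|t|^q$ and its powers, or vanish.
\begin{itemize}
\item In part (1), $q\in(0,2)\setminus\{1\}$. Then $|t|^q$ is not analytic, so it must come from a kernel term, forcing $q=pk$ for some integer $k$. I would then use the second-order coefficient to rule this out. This coefficient is computed via a double operator integral of $x\mapsto|x|^p$, and it is nonzero and of a definite sign, which is incompatible with the right-hand side. When $q/p$ is not an integer the contradiction is immediate.
\item In parts (3) and (4), the right-hand sides are $(1+|t|)^p$ and $\max(1,|t|)^p$. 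Here the singular term is $|t|^1$, or the function is flat near $0$, so matching forces $pk=1$, that is $p=1/k$. This explains the exceptional set $\{1/k : k\in\N\}$. Otherwise the $|t|$ term cannot be produced, or must be absent when the $\ker A$ contributions are nonzero.
\end{itemize}

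\textbf{Main obstacle.} The hardest step is controlling the kernel contributions in the quasi-Banach range $p<1$. Because $|x|^p$ is not differentiable at $0$, the multilinear operator integral expansion applies only to the part of the spectrum away from $0$. The block of $B$ compressed to $\ker A$ must be handled separately, using perturbation of a zero eigenvalue via a Schur complement. I would first try to show that the leading kernel term is $|t|^p\|P_{\ker A}BP_{\ker A}\|_p^p$, which must then vanish or match $|t|^q$, and iterate this argument to higher orders.
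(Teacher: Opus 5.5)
\textbf{Overall.} Your skeleton is the paper's: restrict to $\operatorname{span}\{E_{11},E_{22}\}$, pass to the Hermitian dilation, expand the eigenvalues by Kato--Rellich, and compare with the binomial series. The polar-decomposition alternative you mention does not work, because it leaves $B$ non-self-adjoint and Kato--Rellich is then unavailable. Your exponent count is sound in part (3), and in part (1) whenever $q/p\notin\N$ (this includes $q<p$).

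\textbf{The genuine gap: part (1) with $q=kp$, $k\ge2$} (e.g.\ $p=\frac12$, $q=\frac32$). Your own matching forces $A$ to be singular in this case. Some kernel eigenvalue $\lambda_j(t)=c_jt^{k}(1+O(t))$ then contributes exactly $|c_j|^p|t|^q$ with $q<2$. So $t\mapsto\|A+tB\|_p^p$ is not $C^2$ at $0$, and there is no second-order coefficient to compute. The operator-integral formula of Theorem~\ref{thm:derivative} is available for $|x|^p$, $p<2$, only when $0\notin\sigma(A)$. The sign statement you invoke, Lemma~\ref{lem:nonzero derivative}(ii), needs $A+tB$ invertible near $0$. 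Nor is there a sign in general: for $f(x)=|x|^p$ with $p<1$, the quantity $\frac{f'(a)-f'(b)}{a-b}$ is negative when $ab>0$ and positive when $a>0>b$. The dilated $A$ has spectrum symmetric about $0$, so both signs occur.

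The paper takes no second derivative here. It opens Case~3 by applying the noncommutative Clarkson inequalities \cite[Theorem 2.7]{Mc67} to \eqref{eq:scal-1}, to fix the relative order of $q$ and $p$, and then argues with the binomial expansion alone.

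Within your setup the case can be closed as follows. Take the even part of \eqref{eq:scal-2} in $t$ and set $s=|t|$:
\[
\sum_{m\ge0}\binom{p/q}{m}s^{qm}=\sum_{m\ge0}a_{2m}s^{2m}+\sum_{j}|c_j|^p s^{pk_j}\Bigl(1+\sum_{l\ge1}b_{j,l}s^{2l}\Bigr),
\]
where the first sum on the right comes from the eigenvalues not vanishing at $0$. Your leading-order count gives $pk_j\ge q$ for all $j$, with equality for some $j$; hence $q>p$. Since $q<2$ and $q\notin\N$, neither $2m=2q$ nor $pk_j+2l=2q$ with $l\ge1$ can occur. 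So the coefficient of $s^{2q}$ on the right is $\sum_{pk_j=2q}|c_j|^p\ge0$. On the left it is $\binom{p/q}{2}<0$, a contradiction.

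\textbf{Part (4).} Your count is done at the wrong point: $\max(1,|t|)^p\equiv1$ near $t=0$, so nothing is forced there. The paper expands at $t=1$, i.e.\ applies Kato--Rellich around $A+B$. With $u=t-1$ the left side is $1+\frac p2u+\frac p2|u|+O(u^2)$. The $|u|$-term forces an eigenvalue of $A+B$ vanishing to order $k$ with $pk=1$. Alternatively, $T(E_{11}\pm E_{22})$ have profile $(1+|t|)^p$, and your part (3) computation then applies.

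\textbf{Part (2).} You do not treat this part at all, yet it is where a second-order sign argument actually belongs. The paper combines \eqref{eq:scal-4} with Lemma~\ref{lem:nonzero derivative}(ii) for invertible $A$, and regularizes by $A+xI$, $x\downarrow0$, when $A$ is singular.

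Your kernel iteration would also work here, and it shows where semidefiniteness enters. Let $P$ project onto $\ker A$ and let $S$ be the inverse of $A$ on $\operatorname{ran}A$. Since $p<2p<2\le q$, first $|t|^p\|PBP\|_p^p$ and then $|t|^{2p}\|PBSBP\|_p^p$ must vanish. If $\pm A\ge0$, then $\pm S>0$, so $PBSBP=0$ gives $(I-P)BP=0$. The kernel block then decouples and you are in the invertible case, where Lemma~\ref{lem:nonzero derivative}(ii) finishes; this is needed when $q\in2\N$. For the indefinite dilated $A$ of part (1) this step is unavailable.
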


\begin{thm}(\!\!\cite[Theorem 5.9]{ChHoPaPrRa22})\label{thm:Chatto-2}
 Let $\dim(\hil)=\infty$. Then, $\cS_q(\hil)\nie\cS_p(\hil)$ for $q\neq p$ satisfying $(q,p)\in\big((1,\infty)\setminus\{2\}\times[2,\infty)\big)\bigcup\big([4,\infty)\times\{1\}\big)
\bigcup\big(\{\infty\}\times (1,\infty)\big)\bigcup\big((2,\infty)\times\{\infty\}\big)$.
\end{thm}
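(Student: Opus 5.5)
The plan is to reduce every case to a two-dimensional commutative subspace and play off the smoothness of $t\mapsto\|A+tB\|_p^p$ against the explicit form of the $\ell_q$ norm. Suppose $T:\cS_q(\hil)\ie\cS_p(\hil)$ is an isometry. Fix orthonormal vectors $e_1,e_2$ and set $A=T(e_1\otimes e_1)$ and $B=T(e_2\otimes e_2)$. By \eqref{simple_cases2}, the span of these two rank-one projections is a copy of $\ell_q^2(\C)$, so for all $t\in\R$
\[
\varphi(t):=\|A+tB\|_p^p=(1+|t|^q)^{p/q},\qquad \|A\|_p=\|B\|_p=1,
\]
with the usual convention $\max(1,|t|)$ when $q=\infty$. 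The whole argument consists in showing that no pair $A,B\in\cS_p(\hil)$ can produce this function. For $q=\infty$ we may also use $\ell_\infty^2$ and the normalized pair $A\pm B$, whichever is more convenient.

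\textbf{Case $p\in[2,\infty)$.} The map $X\mapsto\|X\|_p^p$ is twice Fr\'echet differentiable on $\cS_p(\hil)$, so $\varphi\in C^2$. The second derivative at $0$ can be written via a double operator integral as $\Tr\big(B\,T_{f^{[1]}}(B)\big)$-type expressions, where $f(x)=|x|^p$ and the divided difference $f^{[1]}$ is nonnegative.
\begin{enumerate}
\item If $q\in(1,2)$, then $\varphi'(t)\sim p\,|t|^{q-1}\operatorname{sgn}t$ near $0$. This is not Lipschitz, which contradicts $\varphi\in C^2$.
\item If $q>2$ (including $q=\infty$, where $\varphi\equiv1$ near $0$), then $\varphi''(0)=0$. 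Positivity of the divided-difference kernel should then force $B$ to live on the parts where $A$ vanishes: $B\,|A|=0=|A^*|\,B$ up to the relevant supports. In that case $A$ and $B$ are ``disjoint'', and $\|A+tB\|_p^p=1+|t|^p$. This contradicts $(1+|t|^q)^{p/q}$ because $q\ne p$.
\end{enumerate}

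\textbf{Case $p=\infty$, $q\in(2,\infty)$.} Here I would replace differentiability by analytic perturbation theory. After passing to the self-adjoint dilation $\begin{bmatrix}0&X\\X^*&0\end{bmatrix}$, the operator $A+tB$ is an analytic family of compact self-adjoint operators. By Kato--Rellich, the top eigenvalue near $t=0$ is a finite maximum of real-analytic branches. Hence $\|A+tB\|$ is piecewise analytic near $0$. On the other hand, $(1+|t|^q)^{1/q}=1+\tfrac1q|t|^q+\dots$ with $q\notin 2\N$, or even with $q\in2\N$ after comparing with the expansion near $t=\infty$. This expansion is not piecewise analytic at $0$ with the branch structure forced by the multiplicity of $\|A\|$. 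For $q=\infty$ and $p\in(1,\infty)$ I would use the pair $A\pm B$ together with the strict convexity/smoothness of $\cS_p$ to contradict $\|A+ zB\|_p=1$ on a whole arc.

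\textbf{Case $p=1$, $q\ge4$.} The trace norm is not smooth, but along a line it admits an expansion: $\|A+tB\|_1=\|A\|_1+t\,c_1+t^2c_2+t^3c_3+O(t^4)$. This comes from Kato--Rellich for the singular values near the nonzero ones, plus a contribution from the kernel of $A$, which is bounded below by $|t|\,\|P_0BP_0\|_1$. Since $(1+|t|^q)^{1/q}=1+O(|t|^q)$ with $q\ge4$, the coefficients up to order three vanish. In particular $P_0BP_0=0$, and a multilinear operator integral computation of $c_2$ and $c_3$ then forces $B=0$.

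I expect the main obstacle to be the degenerate step in the $p\ge2$, $q>2$ case. One has to show rigorously, in infinite dimensions, that vanishing of the second-order term forces disjointness of $A$ and $B$. This requires control of the double operator integral kernel where $|A|$ has eigenvalues accumulating at $0$. My first attempt would be to approximate $A$ by finite-rank truncations and apply the finite-dimensional analysis behind Theorem~\ref{thm:Chatto-1}, with uniform error bounds.
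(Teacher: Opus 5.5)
Your $p\in[2,\infty)$ argument follows the paper's route. It uses twice differentiability of $t\mapsto\|A+tB\|_p^p$ and a second-derivative formula $\sum_{i,j}(f')^{[1]}(\lambda_i,\lambda_j)\|P_iBP_j\|_2^2$ with nonnegative kernel; note that it is $(f')^{[1]}$, not $f^{[1]}$, that has a sign. Vanishing of this derivative forces $B$ onto $\ker A$, and the paper gets the infinite-dimensional justification from \cite[Theorems 16, 18]{PoSu14}. Your use of strict convexity of $\cS_p$ also correctly handles $q=\infty$, $1<p<2$.

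\textbf{Gap 1: $p=\infty$, $q\in(2,\infty)$.} Kato--Rellich only tells you that $\|A+tB\|$ coincides with an analytic function on each side of $0$. That contradicts $(1+|t|^q)^{1/q}$ only for $q\notin\N$. For $q\in 2\N$ this function is analytic. For odd $q$ it equals $(1+t^q)^{1/q}$ for $t\ge 0$ and $(1-t^q)^{1/q}$ for $t\le 0$, which two analytic branches emanating from a multiple top eigenvalue can produce. Nothing forces $\|A\|$ to be simple. Looking at $t\to\infty$ gives nothing new, since $\|B+sA\|=(1+|s|^q)^{1/q}$ is the same identity with $A,B$ swapped.

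What is missing is the paper's argument, which needs only compactness. With $A$ self-adjoint, let $P$ be the finite-rank projection onto the span of its eigenvectors for $\pm1$, so that $\|(I-P)A(I-P)\|<1$. For a unit vector $x\in P\hil$,
$\|(A+tB)x\|^2=1+2t\,\mathrm{Re}\la Ax,Bx\ra+t^2\|Bx\|^2\le(1+|t|^q)^{2/q}$.
Choose the sign of $t$ so that the linear term is nonnegative and let $t\to 0$; since $q>2$, this gives $Bx=0$. Hence $B=(I-P)B(I-P)$, so $A+tB=PAP\oplus(I-P)(A+tB)(I-P)$ has norm $1$ for small $t$, a contradiction. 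Within your framework this amounts to second-order degenerate perturbation theory at $\pm\|A\|$, not mere analyticity of the branches.

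\textbf{Gap 2: $p=1$, $q\ge 4$.} An expansion $\|A+tB\|_1=\|A\|_1+c_1t+c_2t^2+c_3t^3+O(t^4)$ does not exist in general in infinite dimensions. Kato--Rellich controls finitely many nonzero eigenvalues at a time, while the eigenvalues accumulating at $0$ can dominate. For $A=\bigoplus_k 2^{-k}\mathrm{diag}(1,0)$ and $B=\bigoplus_k 2^{-k/2}\begin{bmatrix}0&1\\1&0\end{bmatrix}$ one gets
$\|A+tB\|_1-\|A\|_1=\sum_k\big(\sqrt{4^{-k}+4t^2 2^{-k}}-2^{-k}\big)\asymp t^2\log(1/|t|)$.
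At first order, a pinching/subgradient bound does give $P_0BP_0=0$ and $\Tr(\mathrm{sgn}(A)B)=0$. Beyond that you would need one-sided lower bounds, which you do not supply.

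Moreover, vanishing of the low-order coefficients does not force $B=0$; take $A=\mathrm{diag}(\tfrac12,\tfrac12)$ and $B=\mathrm{diag}(1,-1)$. In finite dimensions it forces $B=P_+BP_++P_-BP_-$, with $P_\pm$ the positive and negative spectral projections of $A$. That makes $\|A+tB\|_1$ constant near $0$, so the contradiction comes from this structure rather than from $B=0$, and establishing it in infinite dimensions is exactly the hard part.

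The paper avoids all of this with a dimension-free tool: the 2-uniform PL-convexity of $\cS_1(\hil)$. Applied to the complex-linear isometry $\|A+zB\|_1=(1+|z|^q)^{1/q}$, it gives $(1+c)^q\le 4$ for some $c\ge\tfrac12$, which is impossible when $q\ge 4$.
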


We outline the main ideas in the proofs of Theorems~ \ref{thm:Chatto-1} and \ref{thm:Chatto-3}, following \cite{ChHoPaPrRa22, ChHoPrRa23}. Firstly we recall the definition of a multilinear operator integral.

Let $C^k(\R)$ denote the space of $k$-times continuously differentiable functions, where $k\in\N\cup\{0\}$.  For $f\in C^k(\R)$, let $f^{[k]}$ denote the $k$th divided difference of $f$ defined recursively by $f^{[0]}=f$ and, for $k\in\N$,
\begin{align*}
f^{[k]}(\lambda_0,\dots,\lambda_k)
:=\begin{cases}\frac{f^{[k-1]}(\lambda_0,\dots,\lambda_{k-1})
-f^{[k-1]}(\lambda_0,\dots,\lambda_{k-2},\lambda_k)}{\lambda_{k-1}-\lambda_k} &\text{if }\; \lambda_{k-1}\neq\lambda_k,\\
\frac{\partial f^{[n-1]}}{\partial\lambda}(\lambda_0,\dots,\lambda_{k-2},\lambda)\big|_{\lambda=\lambda_{k-1}}
&\text{if }\; \lambda_{k-1}=\lambda_k.
\end{cases}
\end{align*}


Let $\dim(\hil)=m$, let $A_0,\ldots,A_n$ be self-adjoint operators on $\hil$, and let $\{\lambda_i^{(j)}\}_{i=1}^{d_j}$ be the distinct eigenvalues of $A_j$, with $d_j\le m$.
	Define the multilinear map $T^{A_0,\ldots,A_n}_{f^{[n]}}:\bh^n\to\bh$ by
	\begin{align}
\label{def-moi}
	&T^{A_0,\ldots,A_n}_{f^{[n]}}(B_1,\ldots,B_n)\\
\nonumber
&=\sum_{i_0=1}^{d_0}\cdots\sum_{i_n=1}^{d_n}
		f^{[n]}(\lambda_{i_0}^{(0)},\ldots,\lambda_{i_n}^{(n)})
		E_{A_0}(\{\lambda_{i_0}^{(0)}\}) B_1 \cdots B_n
		E_{A_n}(\{\lambda_{i_n}^{(n)}\}),
	\end{align}
	where $E_{A_j}$ denotes the spectral measure of $A_j$.
	The operator $T^{A_0,\ldots,A_n}_{f^{[n]}}$ is called the discrete multilinear operator integral with symbol $f^{[n]}$ (see \cite[Chapter~4]{Skripka-book}).

The following result follows from \cite[Theorem 5.3.2]{Skripka-book}.

\begin{thm}\label{thm:derivative}
Let $n\in\N$. Let $A,B$ be self-adjoint operators on $\ell^n_2$. Then for each $f\in C^2(\R)$, the function $\mathbb{R}\ni t\mapsto \Tr (f(A+tB))$ is twice differentiable, and its second derivative is given by
\[
\frac{d^2}{dt^2} \Tr (f(A+tB)) \Big|_{t=s}
=2!\Tr\!\left(T_{f^{[2]}}^{A+sB,A+sB,A+sB}(B,B)\right).
\]
\end{thm}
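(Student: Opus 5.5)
We will prove the formula for the second derivative of $t\mapsto \Tr(f(A+tB))$ in two stages: first for polynomials by explicit expansion, then for general $f\in C^2(\R)$ by approximation.

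\emph{Reduction.} Since $\hil=\ell_2^n$ is finite-dimensional, all spectra involved lie in a fixed compact interval $[-R,R]$ for $t$ in a bounded neighbourhood of $s$; for instance, take $R=\|A\|+(|s|+1)\|B\|$. Both sides of the claimed identity depend only on the values of $f$, $f'$ and $f''$ on $[-R,R]$. Indeed, the divided difference $f^{[2]}$ at points of $[-R,R]$ is determined by $f$ restricted there and is continuous.

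\emph{Polynomial case.} First take $f(\lambda)=\lambda^k$. Expand
\[
(A+tB)^k=\sum_{j_0+\dots+j_r=k-r}(A+sB)^{j_0}B\cdots B(A+sB)^{j_r}\,(t-s)^r .
\]
The coefficient of $(t-s)^2$ is
\[
\sum_{j_0+j_1+j_2=k-2}X^{j_0}BX^{j_1}BX^{j_2}, \qquad X=A+sB.
\]
Hence the second derivative at $t=s$ is $2!$ times the trace of this sum. Now insert the spectral decomposition $X=\sum_i\lambda_iE_X(\{\lambda_i\})$ in each factor. The sum becomes
\[
\sum_{i_0,i_1,i_2}\Big(\sum_{j_0+j_1+j_2=k-2}\lambda_{i_0}^{j_0}\lambda_{i_1}^{j_1}\lambda_{i_2}^{j_2}\Big)E_{i_0}BE_{i_1}BE_{i_2},
\]
and the inner sum is exactly $f^{[2]}(\lambda_{i_0},\lambda_{i_1},\lambda_{i_2})$ for $f=\lambda^k$, by the standard formula for divided differences of monomials. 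This matches \eqref{def-moi}. Linearity in $f$ then gives the identity for all polynomials.

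\emph{General $f\in C^2$.} Choose polynomials $p_m$ such that $p_m\to f$, $p_m'\to f'$ and $p_m''\to f''$ uniformly on $[-R,R]$. This is possible by applying Weierstrass to $f''$ and integrating twice.

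\begin{itemize}
\item The map $g\mapsto g^{[2]}$ is controlled by $\sup_{[-R,R]}|g''|/2$, via the mean-value/integral representation
\[
g^{[2]}(\lambda_0,\lambda_1,\lambda_2)=\int_{\text{simplex}}g''(\cdot).
\]
Hence $p_m^{[2]}\to f^{[2]}$ uniformly on $[-R,R]^3$.
\item Consequently $\Tr T^{X,X,X}_{p_m^{[2]}}(B,B)\to \Tr T^{X,X,X}_{f^{[2]}}(B,B)$ uniformly in $s$ near the base point. This holds because the discrete operator integral is a finite sum with coefficients bounded by the symbol's supremum, and there are at most $n^3$ terms.
\item Also $\Tr p_m(A+tB)\to\Tr f(A+tB)$ pointwise, since this is the sum of $p_m$ evaluated at the eigenvalues.
\item The first derivatives $\frac{d}{dt}\Tr p_m(A+tB)=\Tr(p_m'(A+tB)B)$ converge to $\Tr(f'(A+tB)B)$. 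This identity is the first-order analogue of the above, proved the same way.
\end{itemize}

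By the classical theorem on uniform convergence of derivatives, applied twice, $\Tr f(A+tB)$ is twice differentiable, and its second derivative is the limit of the polynomial formulas. That limit is the stated expression.

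\emph{Main obstacle.} The main technical point is the continuity of $s\mapsto \Tr T^{X_s,X_s,X_s}_{f^{[2]}}(B,B)$, which is needed for the uniform convergence argument. The spectral projections of $X_s$ can jump at eigenvalue crossings, so this continuity is not obvious term by term. It can be bypassed, however. The expression equals the limit of the traces for polynomials, and each of these is continuous in $s$. Since the convergence is uniform in $s$ by the symbol bound above, the limit is continuous as well.
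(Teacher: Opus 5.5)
Your proof is correct, and it takes a different route from the paper. The paper gives no argument of its own: it deduces the statement directly from the general differentiation result of multilinear operator integration, \cite[Theorem 5.3.2]{Skripka-book}.

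You instead give a self-contained finite-dimensional proof in two steps.
\begin{enumerate}
\item For monomials the identity is purely algebraic. After inserting the spectral resolution of $X=A+sB$, the coefficient of $(t-s)^2$ in $(X+(t-s)B)^k$ has weights $\sum_{j_0+j_1+j_2=k-2}\lambda_{i_0}^{j_0}\lambda_{i_1}^{j_1}\lambda_{i_2}^{j_2}$. These are exactly the second divided differences of $\lambda\mapsto\lambda^k$.
\item The passage to $f\in C^2(\R)$ combines two bounds. The first is the Hermite--Genocchi estimate $\sup_{[-R,R]^3}|g^{[2]}|\le\frac12\sup_{[-R,R]}|g''|$. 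The second is $|\Tr T^{X,X,X}_{\phi}(B,B)|\le C\sup_{[-R,R]^3}|\phi|$ with $C$ independent of $X$. Your count of at most $n^3$ terms gives this; in fact $\Tr(E_iBE_jBE_k)=\delta_{ik}\|E_jBE_i\|_2^2$, so $C=\|B\|_2^2$ works.
\end{enumerate}
The $X$-independence of that constant is what makes the convergence uniform in $t$. That uniformity neutralizes the jumps of spectral projections at eigenvalue crossings.

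What each approach buys: your route is elementary and complete, and as a by-product it also yields the operator-valued identity before taking traces. However, it relies on the spectral sums being finite and so is specific to the matrix setting. The paper's citation is shorter and places the formula inside the multilinear operator integral framework. That framework's infinite-dimensional analogue, \cite[Theorem 16]{PoSu14}, is what the paper uses for Theorem~\ref{thm:Chatto-2}.

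A small remark: you single out continuity of $t\mapsto\Tr T^{X_t,X_t,X_t}_{f^{[2]}}(B,B)$ as the main obstacle, but it is not needed. The theorem on differentiating limits only requires convergence of the functions at one point together with uniform convergence of their derivatives. Your uniform-limit argument for that continuity is correct in any case.
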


Theorem~\ref{thm:derivative} ensures the existence of the second derivative $\frac{d^2}{dt^2}\|A+tB\|_p^p\big|_{t=0}$ and provides an explicit representation of this derivative in terms of the eigenvalues of $A$ and the entries of $B$. If $p\geq 2$, the existence of $\frac{d^2}{dt^2}\|A+tB\|_p^p\big|_{t=0}$ follows from Theorem \ref{thm:derivative} applied to $f(x)=|x|^p$. If $p<2$ and $A$ is invertible, the existence of $\frac{d^2}{dt^2}\|A+tB\|_p^p\big|_{t=0}$ follows from Theorem \ref{thm:derivative} applied to $f\in C^2(\R)$ such that $f(x)=|x|^p$ away from $0$.
The representation for $\frac{d^2}{dt^2}\|A+tB\|_p^p\big|_{t=0}$ provided by Theorem~\ref{thm:derivative} helps determine whether the derivative is nonzero, as stated in the lemma below.

\begin{lma}(\!\!\cite[Lemma 4.1]{ChHoPaPrRa22}, \cite[Lemma 4.12]{ChHoPrRa23})\label{lem:nonzero derivative}
	Let $(q,p)\in (1,\infty)\times (0,\infty)\setminus\{1\}$ and $n\in\N$. Let $A$ and $B$ be two self-adjoint operators on $\ell_2^n$ such that $A$ is diagonal and
	\[(1+|t|^q)^{p/q}= \|A+tB\|_p^p \quad \text{for all } t\in \R.\]
	\begin{enumerate}[(i)]
		\item Then for $p\geq 2$,
		\begin{align*}
			\frac{d^2}{dt^2}\norm{A+tB}_p^p\Big|_{t=0} > 0.
		\end{align*}
		\item If the map $\mathbb{R}\ni t\mapsto A+tB\in \mathcal{B}(\ell_2^n)$ is invertible in a neighborhood of $0$, then
\begin{align*}
\frac{d^2}{dt^2}\norm{A+tB}_p^p\Big|_{t=0}
\begin{cases} > 0 &\text{ for }\, p>1,\\  < 0 &\text{ for }\, p<1.
\end{cases}
\end{align*}	
\end{enumerate}
\end{lma}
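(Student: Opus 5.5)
Write $A=\mathrm{diag}(a_1,\dots,a_n)$ with $a_i\in\R$, and let $B=[b_{ij}]$ be self-adjoint. For $f\in C^2(\R)$, Theorem~\ref{thm:derivative} together with \eqref{def-moi} gives an explicit formula for the second derivative:
\[
\frac{d^2}{dt^2}\Tr f(A+tB)\Big|_{t=0}=2\sum_{i,j=1}^n f^{[2]}(a_i,a_j,a_i)\,|b_{ij}|^2 .
\]
Here we used that the spectral projections of $A$ are coordinate projections and that $\Tr(E_iBE_jBE_i)=|b_{ij}|^2$ when $E_i,E_j$ are rank one; repeated eigenvalues are handled by grouping, which gives the same sum. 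I will apply this with $f(x)=|x|^p$. In case (i), $p\ge2$, so $f\in C^2(\R)$. In case (ii), $A$ is invertible, so I can replace $f$ by a $C^2$ function that equals $|x|^p$ near the spectrum of $A+tB$ for small $t$. The sign of the derivative then reduces to the signs of the coefficients $f^{[2]}(a_i,a_j,a_i)$ and to showing that at least one $b_{ij}$ with a nonzero coefficient is itself nonzero.

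\textbf{Signs of the divided differences.} For $x\ne y$ we have $f^{[2]}(x,y,x)=\frac{f(y)-f(x)-f'(x)(y-x)}{(y-x)^2}$, and for $x=y$ it equals $\tfrac12 f''(x)$. If $p>1$, then $|x|^p$ is strictly convex, so every coefficient is strictly positive. This holds even when some $a_i=0$, provided $p\ge2$; in case (i) with $p=2$, $f''\equiv2$.

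If $p<1$ and $A$ is invertible, the coefficients are negative whenever $a_i,a_j$ lie on the same side of $0$, by concavity of $|x|^p$ on each half-line. For opposite signs I need to check the sign of $|y|^p-|x|^p-p|x|^{p-1}\mathrm{sgn}(x)(y-x)$. Take $x>0>y$. Then $-p x^{p-1}(y-x)=p x^{p-1}(x+|y|)$, and the quantity becomes $|y|^p-x^p+p x^{p-1}x+p x^{p-1}|y|$, which may be positive. So for $p<1$ the mixed-sign terms are the obstacle.

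The way around it is to use the hypothesis more strongly. Since $A+tB$ is invertible near $0$, the inertia of $A+tB$ stays constant. Splitting $A$ into its positive and negative parts, the mixed-sign blocks contribute terms whose total I would control using the hypothesis itself: the second derivative of $(1+|t|^q)^{p/q}$ at $t=0$ is $0$ for $q>2$, $p$ for $q=2$, and $+\infty$ (nonexistent) for $q<2$. I will not pursue this in general. The lemma asserts a sign, so the identity should force the off-diagonal mixed entries to behave.

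\textbf{Using the hypothesis to exclude degeneracy.} Since $\|A\|_p^p=1$ and $\|B\|_p=\lim_{t\to\infty}\|A+tB\|_p/|t|=1$, we have $B\neq0$. In the all-positive-coefficient cases, the derivative is $2\sum c_{ij}|b_{ij}|^2>0$ with every $c_{ij}>0$, which gives (i) and the $p>1$ part of (ii) immediately.

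For (ii) with $p<1$, I would first aim to show that $b_{ij}=0$ whenever $a_ia_j<0$, so that only the negative coefficients survive. My plan is to compare with the first derivative. The function $t\mapsto(1+|t|^q)^{p/q}$ is even with zero first derivative at $0$ (as $q>1$), so $\Tr(|A|^{p-1}\mathrm{sgn}(A)B)=0$. I would combine this with the evenness $\|A+tB\|_p=\|A-tB\|_p$ and Taylor-expand to higher orders to force the mixed blocks to vanish. This is the main step I expect to be hard. If it resists, I would instead restrict to $t$ small and apply Kato--Rellich analytic eigenvalue branches $\lambda_k(t)$ of $A+tB$, none of which cross $0$. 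Then $\|A+tB\|_p^p=\sum_k|\lambda_k(t)|^p$, and each $\lambda_k$ keeps its sign. I would then show $\sum_k\frac{d^2}{dt^2}|\lambda_k|^p<0$ using $\lambda_k''$ from second-order perturbation theory: the mixed terms pair up antisymmetrically and add with a sign matching concavity. Either way, the conclusion is the strict inequality $<0$ for $p<1$.
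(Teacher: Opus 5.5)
Your framework (the explicit second-derivative formula from Theorem~\ref{thm:derivative} plus a sign analysis of divided differences) is the one the survey points to; it gives no proof of its own. It settles (ii) for $p>1$. Symmetrized in $i\leftrightarrow j$, your formula reads $\sum_{i,j}\frac{f'(a_i)-f'(a_j)}{a_i-a_j}|b_{ij}|^2$, with $f''(a_i)$ in place of the quotient when $a_i=a_j$. All these coefficients are positive when $A$ is invertible and $p>1$, and $B\neq0$.

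\textbf{Gap in (i).} Your claim that every coefficient stays strictly positive when some $a_i=0$ is false for $p>2$, since $f^{[2]}(0,0,0)=\tfrac12 f''(0)=0$. So $B\neq0$ does not suffice. Let $P_0$ be the projection onto $\ker A$. The second derivative vanishes exactly when $B=P_0BP_0$, and you must exclude this case using the hypothesis. If $B=P_0BP_0$, then $A+tB$ is block diagonal and $\|A+tB\|_p^p=\|A\|_p^p+|t|^p\|B\|_p^p=1+|t|^p$. Comparing with $(1+|t|^q)^{p/q}$ at $t=1$ forces $p=q$. Hence (i) holds under the assumption $p\neq q$, which is implicit in every application in the survey. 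It genuinely needs that assumption: for $p=q>2$, $A=\mathrm{diag}(1,0)$ and $B=\mathrm{diag}(0,1)$ satisfy the hypothesis, and the second derivative at $0$ is $0$.

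\textbf{Gap in (ii) for $p<1$.} You correctly locate the obstruction, but neither proposed remedy can work. For $a_i>0>a_j$ the symmetrized coefficient is $p\,\frac{a_i^{p-1}+|a_j|^{p-1}}{a_i+|a_j|}$, which is strictly positive. The Kato--Rellich route, with $\lambda_k''(0)=2\sum_{j\neq k}|b_{kj}|^2/(a_k-a_j)$ for simple $a_k$, reproduces exactly the same sum. So there is no antisymmetric cancellation. Nor can the hypothesis force the mixed blocks to vanish. Take $q=2$, $A=2^{-1/p}\mathrm{diag}(1,-1)$ and $B=2^{-1/p}\left(\begin{smallmatrix}0&1\\ 1&0\end{smallmatrix}\right)$. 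Then $A+tB$ has eigenvalues $\pm2^{-1/p}\sqrt{1+t^2}$, so it is invertible for every $t$. Moreover $\|A+tB\|_p^p=(1+t^2)^{p/2}$, whose second derivative at $0$ equals $p>0$. Thus the $p<1$ half of (ii) is false as stated, and no higher-order Taylor argument will rescue it.

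It becomes true under an extra definiteness assumption such as $A\ge0$ or $A\le0$; compare the restriction in Theorem~\ref{thm:Chatto-3}(2). Then all eigenvalues of $A$ lie on one side of $0$. Your concavity computation makes every coefficient strictly negative, and $B\neq0$ finishes the proof.
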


\begin{proof}[Sketch of the proofs of Theorems~\ref{thm:Chatto-1}, \ref{thm:Chatto-3}, and \ref{thm:Chatto-2}]
	We first outline the proofs of Theorems~\ref{thm:Chatto-1} and \ref{thm:Chatto-3}. All arguments proceed by contradiction.
	
	Suppose that $\cS_q^m$ isometrically embeds into $\cS_p^{n}$. Since $\ell_q^2(\C)$ embeds isometrically into $\cS_q^m$, the composition yields an isometric embedding
\[T:\ell_q^2(\C)\ie \cS_p^n.\]
Consider the linear map $J: \ell_q^2(\C)\ie\cS_p(\C^n\oplus\C^n)$ given by
\[J((a_1, a_2))=\frac{1}{2^{1/p}}
\begin{bmatrix}
0 & a_1T(e_1) + a_2 T(e_2)\\
a_1T(e_1)^* + a_2 T(e_2)^* & 0
\end{bmatrix}\quad \text{ for }(a_1,a_2)\in \C^2.
\]
Note that $$A:=J(e_1)=J((1,0)),\quad B:=J(e_2)=J((0,1))$$ are self-adjoint and $$|J((a_1,a_2))|^2= 2^{-2/p} \, {\rm diag} \big(|a_1T(e_1)^*+a_2T(e_2)^*|^2, |a_1T(e_1)+a_2T(e_2)|^2\big).$$ Therefore, since $T$ is an isometry, we have
\begin{align*}
	\|J((a_1, a_2))\|_p^p = &\frac{1}{2}(\|a_1T(e_1)^*+a_2T(e_2)^*\|_p^p + \|a_1T(e_1)+a_2T(e_2)\|_p^p)\\
	=&\frac{1}{2}(\|\bar{a}_1 T(e_1) + \bar{a}_2 T(e_2)\|_p^p + \|a_1T(e_1)+a_2T(e_2)\|_p^p)\\
	=&\frac{1}{2}(\|T((\bar{a}_1, \bar{a}_2))\|_p^p + \|T((a_1, a_2))\|_p^p)\\
	=& \|(a_1, a_2)\|_q^p.
\end{align*}

Hence, $J:\ell_q^2(\C)\to \cS_p(\C^n\oplus\C^n)$ is an isometry and, consequently,
	\begin{align}\label{eq:scal-1}
		(1+|t|^q)^{p/q}=\|(1,t)\|_q^p = \|A+tB\|_p^p \quad \forall t\in\R.
	\end{align}
	
	Since Schatten $p$-norms are unitarily invariant, we may further assume that $A$ is diagonal and $B$ is self-adjoint in \eqref{eq:scal-1}. Let $\{\lambda_i(t)\}_{i=1}^{2n}$ denote the eigenvalues of $A+tB$ (counted with multiplicity). Then \eqref{eq:scal-1} reduces to
	\begin{align}\label{eq:scal-2}
		(1+|t|^q)^{p/q}= \sum_{i=1}^{2n} |\lambda_i(t)|^p \quad \forall t\in\R.
	\end{align}
	
	The remainder of the argument compares the analytic behavior of the scalar functions on both sides of \eqref{eq:scal-2}. The left-hand side is explicit, while the right-hand side is more delicate, since eigenvalues of linear perturbations of diagonal matrices are not necessarily linear in the perturbation parameter. Hence, the analytic behavior of the right-hand sides of \eqref{eq:scal-2} is not readily determined.
To address this issue, \cite{ChHoPaPrRa22} introduces the use of the Kato--Rellich theorem, which asserts that the eigenvalues $\{\lambda_i(t)\}_{i=1}^m$ are locally analytic functions of $t$. 
Below we briefly outline the arguments for the non-embeddability $\cS_q^m\nie\cS_p^{ n}$, case by case.
\smallskip

{\it Case 1:\;} $(q,p)\in (1,\infty)\setminus\{2,3\}\times\{1\}$.

\noindent
For $1<q<4$, the identity \eqref{eq:scal-2} would, via the Kato--Rellich theorem, ultimately imply that $(1+|t|^q)^{1/q}$ admits an analytic extension near $0$, which is false.
	For $q\geq 4$, \eqref{eq:scal-1} and $2$-uniform PL-convexity of $\cS_1^{2n}$ \cite[Theorem 4.3]{DaGaTo84} would imply that there exists $c\geq 1/2$ such that $(1+c)^q\leq 4$, which is impossible.
\smallskip	

{\it Case 2:\;} $(q,p)\in (1,\infty)\setminus\{2\}\times\{\infty\}$.

\noindent
For $1<q<2$, the argument is analogous to that in Case 1. For $q>2$, let
\[\mathcal{E}={\rm span}\{x \in \ell_2^{2n}: Ax=x \text{ or } Ax=-x\},\]
and let $P$ be the orthogonal projection onto $\mathcal{E}$. Note that $(I-P)AP=PA(I-P)=0$ and $\|(I-P)A(I-P)\|<1$. For a unit vector $x\in \mathcal{E}$, \eqref{eq:scal-1} implies
\[\|(A+tB)x\|\le \|A+tB\|=(1+|t|^q)^{1/q},  \quad t\in\R.	\]
For a demonstration of the argument, assume that $Ax=x$ and $\la x,Bx\ra\geq 0$. Consequently,
\begin{align*}	
 (1+\|Bx\|^2t^2)^q &  \leq\bigl(1+2t\la x,Bx\ra+\|Bx\|^2t^2\bigr)^q\\
&=\bigl(1+t(\la Ax,Bx\ra+\la Bx,Ax\ra)+t^2\|Bx\|^2\bigr)^q=\|(A+tB)x\|^{2q}\\ 
&\le 1+2|t|^q+|t|^{2q}, \quad  t>0.
\end{align*}
Combining the latter with the binomial power series expansion yields
\begin{align*}
&1+ q\|Bx\|^2t^{-2} + \mathcal{O}(c^4t^{-4})  \leq 1+2t^{-q}+t^{-2q}, \quad t\gg 1,
\end{align*}
which implies that $\|Bx\|=0$ and, hence, $B=(I-P)B(I-P)$. 
Noticing the block matrix representation
\begin{align*}
A+tB=\begin{bmatrix}PAP & 0 \\0 & (I-P)(A+tB)(I-P)\end{bmatrix},
\end{align*}
we obtain $\|A+tB\|=\max\bigl\{\|PAP\|,\ \|(I-P)(A+tB)(I-P)\|\bigr\}\le 1$ for sufficiently small $t$, which contradicts \eqref{eq:scal-1}. 
Thus $\cS_q^m\nie \cS_\infty^n$.

{\it Case 3:\;} $(q,p)\in (0,2)\setminus\{1\}\times(0,1)$.

\noindent
Noncommutative Clarkson's inequalities \cite[Theorem 2.7]{Mc67} applied to \eqref{eq:scal-1} imply $q\leq p$. To obtain a contradiction, one uses the Kato--Rellich theorem and expands the left-hand side of \eqref{eq:scal-2} via a binomial power series. Subsequently one derives that the right-hand side is differentiable at $0$ while the left-hand side is not, producing the desired contradiction.
\smallskip

{\it Case 4:\;} $(q,p)\in \big(\{\infty\}\times(0,\infty)\setminus\{1/k:k\in\N\}\big)\bigcup \big(\{1\}\times(0,1)\setminus\{1/k:k\in\N\}\big)$.

\noindent
Using the Kato--Rellich theorem, one obtains that the left-hand side of \eqref{eq:scal-1} fails to be differentiable at $t=0$ when $q=1$ and at $t=1$ when $q=\infty$, while the right-hand side is differentiable.  The contradiction completes the proof of Case 4.
\smallskip

{\it Case 5:\;} $(q,p)\in(1,\infty)\setminus\{2\}\times[2,\infty)$.

\noindent
Expanding the left-hand side of \eqref{eq:scal-2} via a binomial power series, we obtain
   \begin{align}\label{eq:scal-3}
   	1+ \frac{p}{q} |t|^q + \mathcal{O}(|t|^{2q}) = \sum_{i=1}^{2n} |\lambda_i(t)|^p \quad \text{for all } t\in (-1,1),
   \end{align}
By Theorem \ref{thm:derivative}, the map
$t \mapsto \|A+tB\|_p^p$ is twice differentiable at $0$. Hence, we obtain
\begin{align}\label{eq:scal-3aa}
\|A+tB\|_p^p=\|A\|_p^p + t\, \frac{d}{dt}\|A+tB\|_p^p\big|_{t=0}
	+\frac{t^2}{2!}\frac{d^2}{dt^2}\|A+tB\|_p^p\big|_{t=0}	+ o(|t|^2).
\end{align}
Combining \eqref{eq:scal-1}, \eqref{eq:scal-3}, and \eqref{eq:scal-3aa} yields
\begin{align}\label{eq:scal-3b}
	1+\frac{p}{q}|t|^q+\mathcal{O}(|t|^{2q})
	= 1 + t\, \frac{d}{dt}\|A+tB\|_p^p\big|_{t=0}
	+\frac{t^2}{2!}\frac{d^2}{dt^2}\|A+tB\|_p^p\big|_{t=0}
	+ o(|t|^2).
\end{align}
Next, note that \eqref{eq:scal-1} implies that $\|A+tB\|_p^p$ attains its minimum at $t=0$, and hence
\[
\frac{d}{dt}\|A+tB\|_p^p\big|_{t=0}=0.
\]
Consequently, \eqref{eq:scal-3b} reduces to
\begin{align}\label{eq:scal-4}
	\lim_{t\to0^+} \frac{p}{q}\, t^{q-2}
	= \frac{d^2}{dt^2}\|A+tB\|_p^p\Big|_{t=0}.
\end{align}
By Lemma \ref{lem:nonzero derivative}~(i), the right-hand side of \eqref{eq:scal-4} is nonzero, which forces $q=2$. This contradicts the assumption that $(q,p)\in(1,\infty)\setminus\{2\}\times[2,\infty)$.

\smallskip	
	
{\it Case 6:\;} $(q,p)\in \big((1,\infty)\times(1,2)\big)\cup\big([2,\infty)\times(0,1)\big)$.

\noindent
If $A$ is invertible, then $t\mapsto A+tB$ is invertible in a neighborhood of $0$.
Therefore, by Lemma~\ref{lem:nonzero derivative}~(ii), $\frac{d^2}{dt^2}\norm{A+tB}_p^p\big|_{t=0}\neq 0$. Combining the latter with \eqref{eq:scal-4} implies $q=2$, contradicting the assumption that $q\neq 2$.
		
If $A$ is singular, consider the invertible linear perturbation $A_x = A + xI$ of $A$ for $x$ in a sufficiently small interval $(0,\epsilon)$. Applying the identity \eqref{eq:scal-1} along with the Kato-Rellich theorem and noting that the eigenvalues of $A_x+tB$ are $\lambda_i(t)+x$, one can derive that
\[\frac{d^2}{dt^2}\|A+tB\|_p^p\Big|_{t=0} = \lim_{x\to 0^+} \frac{d^2}{dt^2}\|A_x+tB\|_p^p\Big|_{t=0},\]
 which is not $0$ by Lemma~\ref{lem:nonzero derivative}~(ii). The latter leads to the same contradiction as in the case of invertible $A$.
	
	Finally, Theorem~\ref{thm:Chatto-2} is obtained from Theorem~\ref{thm:Chatto-1} by using approximation results based on multilinear operator integration from \cite[Theorem 18]{PoSu14} (see also \cite[Theorem 36]{PSTZ}), along with the analogue of Theorem~\ref{thm:derivative} established in \cite[Theorem 16]{PoSu14} (see also \cite[Theorem 42]{PSTZ}).
\end{proof}

By utilizing the $2$-uniform PL-convexity of $\cS_1(\hil)$ similarly to how it was done in the proof of Theorem \ref{thm:Chatto-1} in Case 1 above, we obtain the following non-embeddability result.

\begin{thm}\label{thmrmrk}
Let $\dim(\hil) \geq 2$. Then,
\[\cS_\infty(\hil)\nie \cS_1(\hil).\]
\end{thm}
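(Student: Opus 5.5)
The plan is a proof by contradiction that reduces the claim to a two-dimensional commutative subspace. We then apply the $2$-uniform PL-convexity of $\cS_1$ in the same way as in Case 1 of the proof of Theorem~\ref{thm:Chatto-1}.

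Suppose that $J:\cS_\infty(\hil)\ie\cS_1(\hil)$ is an isometry. Since $\dim(\hil)\ge 2$, the diagonal operators with respect to a fixed orthonormal basis supply an isometric copy of $\ell_\infty^2(\C)$ inside $\cS_\infty(\hil)$, as in \eqref{simple_cases2}. Let $D_1$ and $D_2$ be the rank-one diagonal projections onto the first two basis vectors, and set $X:=J(D_1)$ and $Y:=J(D_2)$. Then
\[
\|X\|_1=\|Y\|_1=1 \quad\text{and}\quad \|X+zY\|_1=\|D_1+zD_2\|=\max\{1,|z|\}=1 \quad\text{for all } z\in\C,\ |z|=1 .
\]

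Next we invoke the $2$-uniform PL-convexity of $\cS_1(\hil)$ from \cite[Theorem 4.3]{DaGaTo84}. It gives a constant $\lambda>0$ (one may take $\lambda=1/2$) such that, for all $X,Y\in\cS_1(\hil)$,
\[
\Big(\|X\|_1^2+\lambda\|Y\|_1^2\Big)^{1/2}\le \frac{1}{2\pi}\int_0^{2\pi}\big\|X+e^{i\theta}Y\big\|_1\,d\theta .
\]
The cited result is stated for $\cS_1^{n}$, and possibly only in a $\max$ or $L^2$-average form. In that case I would apply it to finite-rank truncations $P_kXP_k$ and $P_kYP_k$ by finite-rank projections $P_k\uparrow I$, and pass to the limit using $\|P_kZP_k-Z\|_1\to0$ for $Z\in\cS_1(\hil)$. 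Any of the average forms suffices here, since the integrand is identically $1$.

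Plugging in our $X$ and $Y$, the right-hand side equals $1$, while the left-hand side equals $(1+\lambda)^{1/2}>1$. This is a contradiction, so $\cS_\infty(\hil)\nie\cS_1(\hil)$. The same argument in fact shows the stronger statement $\ell_\infty^2(\C)\nie\cS_1(\hil)$.

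The only real obstacle is ensuring that the PL-convexity inequality is available for $\cS_1(\hil)$ in infinite dimensions with a strictly positive constant. I plan to handle this by the finite-dimensional truncation described above, using the fact that the constant in \cite{DaGaTo84} does not depend on the dimension.
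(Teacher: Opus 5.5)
Your proposal is correct and is essentially the paper's argument. The paper obtains Theorem~\ref{thmrmrk} by applying the $2$-uniform PL-convexity of $\cS_1$ as in Case~1 of Theorem~\ref{thm:Chatto-1}; with $q=\infty$, the images of the two diagonal projections would force $(1+c)^{1/2}\le 1$ for some $c>0$, which is your contradiction, and your finite-rank truncation step is a harmless extra precaution since the paper uses the Davis--Garling--Tomczak-Jaegermann inequality directly for $\cS_1(\hil)$.
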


Since the Kato-Rellich theorem is not available in its full strength for infinite-dimensional operators,
we take a different approach to treat the case $p<2$, which employs Theorem \ref{Dor} and a corollary of the recent groundbreaking work \cite{Otte_Inv} stated below.


\begin{thm}(\!\!\cite[Corollary 1.4]{Otte_Inv}\label{Otte})
 Let $\dim(\hil)=\infty$, $q>0$, $A, B \in \cS_q(\hil)$, and let
 ${\rm span}_\R\{A,B\}_{\cS_q}$ denote the $\|~ \|_q$-span of $A$ and $B$ over $\R$. Then, there is a positive measure $\mu$ on $[0,1]$ such that
$${\rm span}_\R\{A,B\}_{\cS_q}\ie L_q([0, 1], \R, d\mu).$$
\end{thm}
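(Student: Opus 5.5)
The plan is to reduce the statement to finding a positive representing measure for a two-variable homogeneous function, and then to construct that measure from spectral data of the pencil $aA+bB$. Put $N(a,b):=\|aA+bB\|_q^q$ for $(a,b)\in\R^2$. This function is positively homogeneous of degree $q$ and even. A real two-dimensional space embeds isometrically into some $L_q([0,1],\R,d\mu)$ exactly when there is a finite positive measure $\nu$ on the circle $\mathbb T$ with
\[
N(\cos\theta,\sin\theta)=\int_{\mathbb T}|\cos(\theta-\varphi)|^q\,d\nu(\varphi),\qquad \theta\in[0,2\pi).
\]
Indeed, given such a $\nu$, the map $aA+bB\mapsto\big(\varphi\mapsto a\cos\varphi+b\sin\varphi\big)$ is an isometry into $L_q(\mathbb T,\nu)$. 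Transporting $\mathbb T$ onto $[0,1]$ by a measurable bijection then gives the measure $\mu$ in the statement. (If $A,B$ are linearly dependent the claim is trivial, so assume they are independent.)

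First, I would make the pencil self-adjoint using the block trick from the sketch of the proofs of Theorems~\ref{thm:Chatto-1}--\ref{thm:Chatto-2}: replace $aA+bB$ by the self-adjoint $2\times2$ operator matrix with off-diagonal entries $aA+bB$ and $aA^*+bB^*$. Since $a,b$ are real, this only rescales $N$ by a constant factor. Next, I would approximate $A$ and $B$ in $\|\cdot\|_q$ by finite-rank self-adjoint operators. The set of functions $N$ admitting a positive representing measure is closed under pointwise limits on the circle, by weak-$*$ compactness of measures with bounded total mass. So it suffices to treat self-adjoint matrices $A,B$ on $\C^n$.

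In the matrix case, I would write $N(\cos\theta,\sin\theta)=\sum_i|\lambda_i(\theta)|^q$, where the $\lambda_i(\theta)$ are the eigenvalue branches of $A\cos\theta+B\sin\theta$. By Rellich's theorem these branches are real-analytic and $\pi$-antiperiodic up to relabelling. If $A$ and $B$ commute, each branch is of the form $r_i\cos(\theta-\varphi_i)$, and $\nu=\sum_i r_i^q\delta_{\varphi_i}$ works immediately. In general, I would try to solve for $\nu$ via Fourier series. The Fourier coefficients of $|\cos\theta|^q$ at even frequencies $2k$ are explicit Gamma-function ratios. When $q\notin2\N$ they are all nonzero, so $\nu$ is uniquely determined as a distribution by dividing Fourier coefficients. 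For $q\in2\N$ there is freedom in the choice, which should help.

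The main obstacle is proving that this distribution is a positive measure. Equivalently, one needs a sign condition on the appropriate fractional derivative of $\theta\mapsto N(\cos\theta,\sin\theta)$ (Koldobsky's Fourier criterion in dimension two). My first attempt would be to decompose $N$ branch by branch, writing $|\lambda_i(\theta)|^q$ as a superposition of $|\cos(\theta-\varphi)|^q$. Trace convexity and concavity properties of $x\mapsto|x|^q$, in the spirit of the multilinear operator integral formula of Theorem~\ref{thm:derivative}, would be used to control the sign of the second-order terms. This is where I expect the real difficulty, and where the deep input of \cite{Otte_Inv} is needed.
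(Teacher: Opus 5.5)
The paper does not prove this statement; it quotes it from \cite[Corollary~1.4]{Otte_Inv}. Measured against what has to be shown, your proposal has a genuine gap, and it sits exactly where the content of the theorem lies.

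Your reductions are fine:
\begin{itemize}
\item a positive $\nu$ with $N(\cos\theta,\sin\theta)=\int|\cos(\theta-\varphi)|^q\,d\nu(\varphi)$ does give the embedding;
\item the self-adjoint dilation only rescales $N$ by $2$;
\item the masses $\nu_n(\mathbb T)$ are controlled by $\int_0^{2\pi}N_n\,d\theta$, so the weak-$*$ limiting argument works;
\item the commuting case is immediate.
\end{itemize}
But these steps are routine. The whole theorem is the assertion that the representing distribution is a \emph{positive} measure when $A$ and $B$ do not commute. You do not prove this; you defer it to the very reference whose statement you are proving. The case $q\in2\N$ is no easier. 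There you must still show that the binary form $\Tr(aA+bB)^q$ is a positive combination of the forms $(a\cos\varphi+b\sin\varphi)^q$. That cone is strictly smaller than the cone of nonnegative forms; for $q=4$ it does not contain $a^2b^2$.

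Moreover, the route you sketch for the missing step cannot work as stated.

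\emph{There is no branch-by-branch decomposition.} Take $A=\operatorname{diag}(1,0)$ and $B=\begin{pmatrix}0&1\\1&0\end{pmatrix}$. The analytic branches of $A\cos\theta+B\sin\theta$ are $\lambda_\pm(\theta)=\frac12\big(\cos\theta\pm\sqrt{1+3\sin^2\theta}\big)$. They never cross and satisfy $\lambda_+(\theta+\pi)=-\lambda_-(\theta)$, so $|\lambda_+(0)|^q=1\neq0=|\lambda_+(\pi)|^q$. Hence $|\lambda_+|^q$ is not $\pi$-periodic, and it has no representation $\int|\cos(\theta-\varphi)|^q\,d\nu(\varphi)$ even with a signed $\nu$. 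Positivity can only emerge from the interaction of all branches.

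\emph{Second-order information cannot decide the sign.} Information of the type in Theorem~\ref{thm:derivative} and Lemma~\ref{lem:nonzero derivative} is too weak, because positivity of $\nu$ is a condition of order $q+1$ on $N$.
\begin{itemize}
\item For $q=1$ it reads $N''+N\ge0$. This is automatic for a norm, which is why planes embed in $L_1$.
\item For $q=3$ one has $(\partial_\theta^2+1)(\partial_\theta^2+9)|\cos\theta|^3=12(\delta_{\pi/2}+\delta_{3\pi/2})$. So the condition is the fourth-order inequality $(\partial_\theta^2+1)(\partial_\theta^2+9)N\ge0$.
\item For $q\notin\N$ the condition is nonlocal.
\end{itemize}
Establishing such a sign for $\sum_i|\lambda_i(\theta)|^q$ needs a genuinely global idea, which is what \cite{Otte_Inv} provides. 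Without it, your argument covers only commuting pairs.
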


The following result is new.

\begin{thm}\label{mainthm}
 Let $\dim(\hil)=\infty$. Then,  $\cS_q(\hil)\nie\cS_p(\hil)$ for $q<p$ satisfying $(q,p)\in [1,2)\times(1,\infty)$ and for $q\neq p$ satisfying $(q,p)\in(2,\infty)\times (1,\infty)$.
\end{thm}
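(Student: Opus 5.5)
We argue by contradiction, combining Theorem~\ref{Otte} (applied in the target space $\cS_p(\hil)$) with Dor's Theorem~\ref{Dor}. Suppose $J:\cS_q(\hil)\ie\cS_p(\hil)$ is a linear isometry, with $(q,p)$ in one of the two ranges of the statement. By \eqref{simple_cases2}, $\ell_q^2(\C)\ie\ell_q(\C)\ie\cS_q(\hil)$. Restricting to real scalars, the real space $\ell_q^2(\R)$ embeds $\R$-linearly and isometrically into $\cS_q(\hil)$, for instance via $(a_1,a_2)\mapsto {\rm diag}(a_1,a_2,0,0,\dots)$. Composing with $J$ gives an $\R$-linear isometry
\[
T:\ell_q^2(\R)\to\cS_p(\hil).
\]

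Set $A=T(e_1)$ and $B=T(e_2)$, both in $\cS_p(\hil)$. The image of $T$ is exactly ${\rm span}_\R\{A,B\}_{\cS_p}$, the real span of $A$ and $B$ equipped with $\|\cdot\|_p$. It is two-dimensional, since $T$ is injective. Hence $\ell_q^2(\R)$ is isometric to ${\rm span}_\R\{A,B\}_{\cS_p}$. Now apply Theorem~\ref{Otte} with the exponent $p$ (we have $p>1>0$ and $\dim(\hil)=\infty$). This gives a positive measure $\mu$ on $[0,1]$ with ${\rm span}_\R\{A,B\}_{\cS_p}\ie L_p([0,1],\R,d\mu)$. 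Composing the two isometries yields $\ell_q^2(\R)\ie L_p([0,1],\R,d\mu)$.

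It remains to check the hypotheses of Theorem~\ref{Dor}; this is just matching parameter ranges. In the first case, $1\le q<2$, $p>1$ and $q<p$, which is precisely the first alternative of Theorem~\ref{Dor}. In the second case, $q>2$, $p>1$ and $q\ne p$, which is precisely the second alternative. In both cases Theorem~\ref{Dor} gives $\ell_q^2(\R)\nie L_p([0,1],\R,d\mu)$, for an arbitrary positive measure $\mu$ by the remark following that theorem. This contradicts the embedding just obtained, so $\cS_q(\hil)\nie\cS_p(\hil)$.

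The only delicate point is to apply Theorem~\ref{Otte} in the correct space and over the correct scalar field. It must be applied to the $\cS_p$-span, not the $\cS_q$-span, and its conclusion is real-linear, so we deliberately pass from the complex embedding to the real sequence space $\ell_q^2(\R)$ before invoking it. We also need the span produced by Theorem~\ref{Otte} to be the full two-dimensional isometric image of $\ell_q^2(\R)$, which holds because $T$ is an isometry. Beyond these points we expect no analytic obstacle: all the depth is contained in Theorems~\ref{Otte} and~\ref{Dor}.
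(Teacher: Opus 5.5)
Your proposal is correct and is essentially the paper's proof. You embed $\ell_q^2(\R)$ diagonally into $\cS_q(\hil)$, compose with the putative isometry into $\cS_p(\hil)$, apply Theorem~\ref{Otte} to the resulting real two-dimensional subspace of $\cS_p(\hil)$ to land in $L_p([0,1],\R,d\mu)$, and then contradict Theorem~\ref{Dor}; your point that Theorem~\ref{Otte} must be applied to the $\cS_p$-span (rather than the $\cS_q$-span written in the paper) is a correct clarification of the same argument.
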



\begin{proof}
Note that $\ell_{q}^{2}(\R)\ie {\rm span}_\R\{A,B\}_{\cS_q}\ie \cS_{q}(\hil)$,  where the first embedding is given by the isometry mapping $(1,0)\mapsto A=\rm{dig}(1,0,0,\cdots)$ and $(0,1)\mapsto B= \rm{dig}(0,1,0,\cdots)$.
If $\cS_{q}(\hil)$ embeds into $\cS_{p}(\hil)$, then it follows that $\ell_{q}^{2}(\R)$ embeds into $\cS_{p}(\hil)$. By Theorem~\ref{Otte}, $\ell_{q}^{2}(\R)$ embeds into $L_{p}([0, 1], \R, d\mu)$. Finally, an application of Theorem \ref{Dor} concludes the proof by contradiction.
\end{proof}
	
	\begin{rmrk}
The ranges of parameters for which the results of Theorems~\ref{thm:Chatto-2} and~\ref{mainthm} hold overlap on the set of pairs $(q,p)$ with $q\neq p$ such that $(q,p)\in(1,\infty)\setminus\{2\}\times[2,\infty)$. Thus, Theorem \ref{mainthm} provides an alternative proof
that avoids the use of multilinear operator integration techniques.
	\end{rmrk}
	
\section{Open problems}	
The methods and arguments presented in the preceding section do not suffice to resolve Problem~\ref{prob-0} completely; several questions therefore remain open and merit further investigation.

\begin{prob}\label{Pr}
	Let $m,n\in\N$ with $2\le m\le n$. Let $p, q>0$ and $p\neq q$. The following questions concerning isometric embeddings of Schatten classes remain unresolved:
	\begin{enumerate}[(i)]
\item Does $\cS_2^m$ embed isometrically into $\cS_p^n$ for $p\in(0,\infty]$ and $n<m^2$?

This question remains unresolved in Theorem \ref{simple_thm}.
		\smallskip

		\item Does $\cS_3^m$ embed isometrically into $\cS_1^n$?
		
		This question remains unresolved in Theorem \ref{thm:Chatto-1}.
		\smallskip

\item Do $\cS_1^m$ and $\cS_\infty^m$ embed isometrically into $\cS_p^n$ for $p\in\{\frac{1}{k}:k\in\mathbb{N}{\setminus \{1\}}\}$?

This question remains unresolved in Theorem \ref{thm:Chatto-3} (3),(4).
		\smallskip
		
		\item Does $\cS_q^m$ embed isometrically into $\cS_p^n$ for $(q, p)\in (2, \infty)\times (0,1)$?
		
	This question remains unresolved in Theorems \ref{thm:Chatto-3} (1) and \ref{thmrmrk}.
		\smallskip
		
		\item Does $\cS_q(\hil)$ embed isometrically into $\cS_p(\hil)$ for

 $(q,p)\in \big((0,\infty]\setminus\{2\}\times (0,1)\big)\bigcup\big((1,4)\times \{1\}\big)\bigcup\big((1,2)\times \{\infty\}\big) $?

 This question remains unresolved in Theorems \ref{simple_thm}, \ref{thm:Chatto-2}, \ref{thmrmrk}, and \ref{mainthm}.
\smallskip

\item Does $\cS_q(\hil)$ embed isometrically into $\cS_p(\hil)$ for $q>p$ satisfying $(q,p)\in(1,2)\times(1,2)$?

This question remains unresolved in Theorems \ref{thm:Ray} and \ref{mainthm}.
	\end{enumerate}
\end{prob}

\section*{Acknowledgements}

A.~Chattopadhyay and C.~Pradhan thank Samya K.~Ray for introducing the problem of isometric embeddings of Schatten classes and express their deepest gratitude to Otte Hein\"avaara for explaining the details of his result \cite[Corollary 1.4]{Otte_Inv} and for bringing to attention 
\cite[Theorem 2.1]{Dor_Israel}, the result of which had been independently obtained without prior knowledge of the work. A. Chattopadhyay is supported by the Core Research Grant (CRG), File No: CRG/2023/004826, of SERB. A. Skripka is supported in part by Simons Foundation Grant MP-TSM-00002648. C. Pradhan acknowledges support from the Fulbright-Nehru postdoctoral fellowship.
	

\end{document}